\def \ve{\varepsilon}
\def \div {{\rm div}}
\numberwithin{equation}{section}
\newtheorem{thm}{Theorem}
\numberwithin{thm}{section}
\newtheorem{rem}[thm]{Remark}
\newtheorem{prop}[thm]{Proposition}
\newtheorem{lem}[thm]{Lemma}
\title{Homogenization of nonlocal spectral problems}
\author[1]{Andrey  Piatnitski}
\author[2]{Volodymyr Rybalko}
\affil[1]{The Arctic University of Norway, UiT, campus Narvik, Norway, 
e-mail:  apiatnitski@gmail.com, andrey.piatnitski@uit.no}
\affil[2]{B. Verkin Institute for Low Temperature Physics and Engineering of the National Academy of Sciences of Ukraine, Ukraine;
Department of Mathematical Sciences, Chalmers University of Technology and Gothenburg University, Sweden,
e-mail: vrybalko@ilt.kharkov.ua, rybalko@chalmers.se}
\begin{document}
\maketitle

\begin{abstract} We study asymptotic behavior of the bottom point of 
the spectrum of convolution type operators in  environments with locally periodic microstructure. 
We show that its limit is described by an additive eigenvalue problem for Hamilton-Jacobi equation.
In the periodic case we establish a more accurate two-term asymptotic formula. 
\end{abstract}'


{\bf Keywords:} Spectral problems, homogenization, nonlocal operators

\section{Introduction}

This work deals with homogenization of spectral problems for nonlocal convolution type
operators in environments with locally periodic microstructure. In a bounded $C^1$-domain $\Omega\subset\mathbb R^d$ we consider the spectral
problem
\begin{equation}
\mathcal{L}_\ve\rho_\ve = \lambda_\ve \rho_\ve(x)\quad \text{in}\ \Omega
\label{start_eq}
\end{equation}
for the operator
\begin{equation}
\mathcal{L}_\ve\rho =-\frac{1}{\ve^{d}}\int_\Omega J\Bigl({\frac{x-y}{\ve}}\Bigr)
\kappa\Bigl(x,y, \frac{x}{\ve},\frac{y}{\ve}\Bigr)
\rho(y)dy+
a\Bigl(x, \frac{x}{\ve}\Bigr)\rho(x),
\label{main_operator_locper}
\end{equation}
with a small parameter $\ve>0$ that characterizes the microscopic length scale of the medium.
Under natural positiveness and periodicity conditions as well as fast decay of $J$ at infinity we study the bottom of the spectrum of this problem. The main 
focus of this work is on the asymptotic behavior as $\ve\to 0$ of the point of the spectrum with the smallest real part.

Many models in mathematical biology and population dynamics take into account nonlocal interactions in the studied
systems.  These interactions are described by convolution-type
integral operators with integrable kernels. More specifically, a simplest nonlocal model of population dynamics reads (see, e.g.,  \cite{HutMarMisVic2003}), \cite{OthDunAlt1988})
\begin{equation*}
\partial_t\rho(x,t)-\int_\Omega J(x-y) \rho(y,t)dy+\int_{\mathbb{R}^d} J(y-x) dy\, \rho(x,t)=0 \quad \text{in}\ \ \Omega,\quad \rho =0\quad \text{on}\ \mathbb{R}^d\setminus \Omega,
\end{equation*}
where $\rho$ denotes  population density,  $J(x-y)\geq 0$ is a dispersal kernel that describes the rate of jumps from the location $y$ to the location $x$ and the above equation
defines nonlocal  transport in $\Omega$, while the Dirichlet condition $\rho=0$ (imposed everywhere on $\mathbb{R}^d\setminus \Omega$) represents
the case of a hostile exterior domain. In the case when the growth of the population is taken into account the above equation is also
supplemented by an additional KPP type (local) nonlinear term (see, e.g., \cite{Fif2017}, \cite{GriHinHutMisVic2005}). Then the large time behavior of $\rho(x,t)$ can be qualitatively characterized by
linearizing the problem and studying the bottom part of the spectrum of the corresponding operator, in particular, an optimal persistence criterion
is formulated in terms of the bottom point of the spectrum \cite{BatZha2007}, \cite{Cov2010}, \cite{Cov2015}, \cite{GarRos2009}, \cite{KaoLouShe2010}, \cite{SheZha2012}.


To model nonlocal diffusion in strongly inhomogeneous media, in \cite{PiaZhi2017}, \cite{PiaZhi2019} evolution problems with dispersal kernels of the form $J(x-y)\kappa(x,y)$
with integrable $J\geq 0$ 
and positive periodic $\kappa$ were considered in the parabolic scaling $t\to t/\ve^2$, $x\to x/\ve$. 
It was shown in  \cite{PiaZhi2017}  that  the asymptotic behavior of solutions is described by a local effective diffusion problem in the symmetric case (when  $J(z)=J(-z)$ and $\kappa(x,y)=\kappa(y,x)$),
while \cite{PiaZhi2019} revealed a large effective drift  
appearing in asymmetric case, and  the corresponding homogenization result was established  in rapidly moving coordinates.  The approach
in \cite{PiaZhi2019}  was developed  for problems stated in the whole space $\mathbb{R}^d$, and it fails to work in the case of a bounded domain because of the presence of large effective drift. To overcome this
difficulty one can combine the study of the evolution problem with the spectral analysis of problem \eqref{start_eq}.



Peculiar features of the spectral problem  \eqref{start_eq} can be seen in the particular case of 
nonlocal diffusion operator in $\mathbb{R}^d$
\begin{equation}
\mathcal{L}_\ve^D\rho =-\frac{1}{\ve^{d}}\int_{\mathbb R^{d+1}} J\Bigl({\frac{x-y}{\ve}}\Bigr)
\kappa\Bigl(\frac{x}{\ve},\frac{y}{\ve}\Bigr)
(\rho(y)-\rho(x))dy
\label{operator_locper_ctype}
\end{equation}
with a periodic coefficient  $\kappa$. In light of \cite{PiaZhi2019} the operator $\frac{1}{\ve} \mathcal{L}_\ve^D$ can be regarded as a small singular perturbation 
of a first order differential operator $B\cdot\nabla$ with constant drift vector $B$  when $\ve$ is small. Moreover, $\frac{1}{\ve^2} \mathcal{L}_\ve^D$ is close, in the sense of the 
resolvent convergence, 
to the operator  $-\,\div (A\nabla\, \cdot\,) + \frac{1}{\ve}B \cdot\nabla$ with 
some constant matrix $A>0$. Therefore, one can 
expect that the asymptotic behavior of the bottom part of 
the spectrum of the operator $\mathcal{L}_\ve$ (given by \eqref{main_operator_locper})
is also somehow similar to that of  singularly perturbed elliptic differential operators.

%

It should be noted that, in contrast with differential operators,
the point of the spectrum with the smallest real part need not be a principal eigenvalue, it might lie on the edge of the essential spectrum. However, 
we show 
that  the limit of this point as $\ve\to 0$ can be specified in terms of an additive eigenvalue for an effective
Hamilton-Jacobi equation,  like in the case of   principal eigenvalues of singularly perturbed elliptic differential operators.  Despite this
similarity of  results, related to the fact that the operator $\mathcal{L}_\ve$ is becoming more and more localized for small $\ve$, many technical aspect 
in proofs  in local and nonlocal cases are different. 


The asymptotic behavior of the principal eigenpair
of singularly perturbed local (differential) convection-diffusion operators with oscillating coefficients was studied in \cite{PiaRyb2016} (see also \cite{PiaRybRyb2014}), where 
the Cole–Hopf transformation $e^{-W_\ve(x)/\ve}$ of the first eigenfunction was used, yielding
a perturbed Hamilton-Jacobi equation.  The latter was naturally dealt with by means of the vanishing viscosity techniques.
In the case of nonlocal operators the Cole–Hopf transformation does not lead to a perturbed Hamilton-Jacobi equation, 
instead we exploit monotonicity of the map $W_\ve\mapsto e^{-W_\ve/\ve}$ to devise a version of perturbed test functions
method \cite{Eva1989} for nonlocal opearators. The main difficulty however is in finding relevant uniform
bounds for the function $W_\ve$. In the case of local operators appropriate tools are the Berstain estimates and the
Harnack inequality.  The Berstain's method is developed for local operators and it is not clear how to adapt it in the nonlocal
case, while the known results \cite{Cov2012} on the Harnack type inequalities are established for dispersal kernels with a finite support and
are not scaling invariant. We prove instead uniform estimates in Lemma \ref{HighTechLemma} to establish the existence
of half-relaxed limits of $W_\ve$.
%
%

When the function $\kappa$ in \eqref{main_operator_locper}  does not depend on the slow variables (periodic case)
the solution of the effective Hamilton-Jacobi problem is a linear function. In this case, using a factorization trick 
inspired by asymptotic analysis of differential operators with periodic coefficients  \cite{Capdeb1998},  we establish more accurate  
two-term asymptotic formula for eigenvalues (if exist) in the bottom part of the spectrum. This, in particular, generalizes an asymptotic
result of \cite{BerCovHoa2016} obtained for symmetric operators with homogeneous dispersal kernels (remark however 
that in this special case the analysis in \cite{BerCovHoa2016} covers also unbounded domains that are outside of the scope of the present work).

It is known that operator \eqref{operator_locper_ctype} is the generator of a jump Markov process in $\mathbb R^d$. Therefore, one of the way of
obtaining homogenization results for problems involving operator \eqref{operator_locper_ctype} is based on probabilistic interpretation
of solutions to these problems and on the limit theorems for the corresponding jump Markov processes. 
In particular, an alternative approach of studying the principal eigenpair of 
problem \eqref{start_eq}--\eqref{main_operator_locper} could rely on a probabilistic interpretation 
of  operator \eqref{main_operator_locper}. 
Since this operator is the generator of a jump Markov process with birth and death,
one can try to exploit the large deviation principle  for this process in order to investigate the asymptotic
properties of problem \eqref{start_eq}--\eqref{main_operator_locper}.  In the case of operators defined in  
\eqref{operator_locper_ctype} and  similar locally periodic operators 
 the large deviation
result for the mentioned jump processes was obtained in  \cite{PiaPirZhi2022}. 

The paper is organized as follows. In Section \ref{Setup} we state Theorem \ref{Golovna_locperiodic}  describing
the limit  as $\ve\to 0$ of the bottom point of the spectrum of the operator  \eqref{main_operator_locper}
in general locally periodic case. Section \ref{Periodiccase} is devoted to
establishing more precise asymptotics of eigenvalues in the case of
periodic environments. Finally,
Section \ref{Locallyperiodiccase} contains the proof of Theorem  \ref{Golovna_locperiodic}.






\section{Problem setup.Convergence result for the bottom point of the spectrum}
\label{Setup}

%
%
%
We begin with specifying assumptions on the functions $J$, $\kappa$ and $a$ appearing in the definition \eqref{main_operator_locper} of  $\mathcal{L}_\ve$.
We assume that $J$ satisfies
\begin{equation}
	\label{umova_dva}
	J\in C(\mathbb{R}^d), \ J(0)>0 \ \ \text{ and}\ \
	0\leq J(z)\leq C e^{ 
	-|z|^{1+\beta} }\ \forall z\in \mathbb{R}^d,
	\end{equation}
for some $C, \beta>0$, 
while
 \begin{equation}
 \label{akappa}
  \kappa>0\quad \text{and}\quad \kappa\in C(\overline{\Omega}\times\overline{\Omega} \times \mathbb{T}^d\times \mathbb{T}^d),\quad  a\in C(\overline{\Omega}\times \mathbb{T}^d),
  \end{equation}
where $\mathbb{T}^d$ denotes the torus $\mathbb{T}^d=\mathbb{R}^d/\mathbb{Z}^d$ and we identify periodic functions in $\mathbb{R}^d$ with functions defined on $\mathbb{T}^d$.
Thus coefficients $\kappa$ and $a$ in \eqref{main_operator_locper} are periodic functions of the fast variables $\xi=x/\ve$ and $\eta=y/\ve$, so that
operator \eqref{main_operator_locper} correspond to a locally periodic environment.


We are interested in the asymptotic behavior as $\ve\to 0$ of the following quantity introduced in \cite{Cov2010},
\begin{equation}
\lambda_\ve=\sup\Bigl\{\lambda \,\bigl| \bigr.
\, \exists v\in C(\overline\Omega),\, v>0 \ \text{such that} \
\mathcal{L}_\ve v
 \geq \lambda v \ \text{in}\ \Omega \Bigr\}.
\label{Pr_e_v_supformula}
\end{equation}
It is known  \cite{LiCovWan2017} (Theorem 2.2) that $\lambda_\ve$ belongs to the spectrum  $\sigma(\mathcal{L}_\ve)$
of the operator $\mathcal{L}_\ve$ (considered in $L^2(\Omega)$ or $C(\overline\Omega)$) and
$\lambda_\ve =\inf\{{\rm Re}\lambda\,|\, \lambda\in \sigma(\mathcal{L}_\ve)\}$, i.e. $\lambda_\ve$ is the bottom
point of the spectrum.  Typically $\lambda_\ve$ is the principal eigenvalue of the operator
$\mathcal{L}_\ve$, i.e.  an isolated simple eigenvalue (with minimal real part)
whose corresponding eigenfunction can be chosen strictly positive,
as in the case of elliptic differential operators.
However if  $\lambda_\ve=\min_{\overline\Omega} a(x, \frac{x}{\ve})$
rather than  $\lambda_\ve<\min_{\overline\Omega} a(x, \frac{x}{\ve})$, the principal eigenvalue  does not exist
and $\lambda_\ve$ is the bottom point of the essential spectrum of  $\mathcal{L}_\ve$.
In both cases the sign of $\lambda_\ve$ is crucial for stability of the corresponding evolution semigroup $e^{-\mathcal{L}_\ve t}$
or for the maximum principle to hold, see Theorem 2.3 in \cite{LiCovWan2017}.

To state the  main result on the asymptotic behavior of $\lambda_\ve$  for locally periodic environments,
introduce the following  function
\begin{equation}
\begin{aligned}
H(p,x)=\sup\Biggl\{\lambda \,\bigl.\Bigr| \Biggr.&\, \exists \varphi\in C(\mathbb{T}^d),
\, \varphi>0, \ \text{such that} \\
&\Biggl.
-\int_{\mathbb{R}^d} J(z)e^{p\cdot z}\kappa(x,x,\xi,\xi-z) \varphi(\xi-z)d z+
 a(x, \xi) \varphi(\xi)\geq \lambda \varphi(\xi) \ \text{on}\ \mathbb{T}^d \Biggr\}.
 \end{aligned}
\label{Hamilt_all_p_sup_dep_on_x}
\end{equation}
\begin{thm}
\label{Golovna_locperiodic} 
Suppose that $J$ satisfies
\eqref{umova_dva} and $\kappa$, $a$ satisfy \eqref{akappa}.
Then
\begin{equation}
\lambda_\ve\to -\Lambda \quad\text{as}\ \ve\to 0, 
 \label{NuTodiVseOk}
 \end{equation}
where $\Lambda$ is a unique additive eigenvalue of the problem
\begin{equation}
\label{HJ_lim_eq}
- H(\nabla W(x),x)=\Lambda\quad\text{in}\ \Omega,\quad - H(\nabla W(x),x)\geq \Lambda\quad \text{on}\  \partial\Omega.
\end{equation}
 \end{thm}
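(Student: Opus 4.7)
The plan is to carry out the perturbed test function method of Evans, adapted to the nonlocal setting as hinted at in the introduction. I begin with the Cole-Hopf type logarithmic transformation $W_\ve := -\ve \log \rho_\ve$, where $\rho_\ve > 0$ is either the principal eigenfunction of $\mathcal{L}_\ve$ (when $\lambda_\ve$ is attained) or a maximizing approximate positive generalized eigenfunction produced by \eqref{Pr_e_v_supformula}. Substituting $\rho_\ve = e^{-W_\ve/\ve}$, changing variables $z=(x-y)/\ve$, and dividing through by $\rho_\ve(x)$ yields
\begin{equation*}
-\int J(z)\, \kappa(x, x-\ve z, x/\ve, x/\ve - z)\, \exp\!\Bigl(-\tfrac{W_\ve(x-\ve z) - W_\ve(x)}{\ve}\Bigr)\, dz + a(x, x/\ve) \simeq \lambda_\ve.
\end{equation*}
If formally $W_\ve \to W$ smoothly with $\nabla W(x_0) = p$, the exponential factor tends to $e^{p\cdot z}$, while the fast-variable dependence $\xi = x/\ve$ is retained; separating these two scales produces precisely the cell eigenvalue problem that defines $H(p,x)$ in \eqref{Hamilt_all_p_sup_dep_on_x}, which suggests that $-\lim \lambda_\ve$ should be an additive eigenvalue of $-H(\nabla W, x)$.

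The next step is to secure enough compactness of the family $\{W_\ve\}$ to pass to the upper and lower half-relaxed limits $W^*$ and $W_*$. \emph{This is where I expect the main technical obstacle to lie}: in the local elliptic case one would combine Bernstein-type gradient estimates with the Harnack inequality, but neither tool adapts cleanly to $\mathcal{L}_\ve$ because $J$ has unbounded (though rapidly decaying) support and no scale-invariant Harnack inequality is available. Obtaining uniform one-sided modulus estimates on $W_\ve$, up to an additive normalization, is precisely the role of Lemma \ref{HighTechLemma}, and is where most of the serious work of the proof concentrates.

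With the half-relaxed limits in hand, the next task is to verify that $W^*$ is a viscosity subsolution and $W_*$ a viscosity supersolution of the ergodic problem $-H(\nabla W, x) = -\mu$ in $\Omega$ (with the generalized boundary inequality from \eqref{HJ_lim_eq}), where $\mu$ is any limit point of $\{\lambda_\ve\}$. For this I deploy the perturbed test function strategy: at a local extremum $x_0$ of $W^* - \phi$ with $p = \nabla \phi(x_0)$, take $\varphi$ to be the positive cell eigenfunction realizing $H(p, x_0)$ in \eqref{Hamilt_all_p_sup_dep_on_x}, and test $\rho_\ve$ against $\exp(-(\phi(x) + \ve \log \varphi(x/\ve))/\ve)$. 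The decisive point making this effective in the nonlocal setting is the monotonicity of $W \mapsto e^{-W/\ve}$, which converts the viscosity-type inequality for $W_\ve$ into a pointwise comparison of exponentials that can be inserted directly into $\mathcal{L}_\ve$. The uniform exponential decay of $J$ in \eqref{umova_dva} then controls the tails of the $z$-integration and, by dominated convergence, reproduces exactly the integral inequality characterizing $H(p,x_0)$.

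Finally, the convergence $\lambda_\ve \to -\Lambda$ follows from the stated uniqueness of the additive eigenvalue: every accumulation point $\mu$ of $\lambda_\ve$ produces through $W^*$ and $W_*$ a sub-/supersolution pair for $-H(\nabla W, x) = -\mu$ with the correct relaxed boundary condition, and the ergodic comparison principle for \eqref{HJ_lim_eq} forces $\mu = -\Lambda$. A secondary delicate point, besides Lemma \ref{HighTechLemma}, is the behavior on $\partial\Omega$: since $\mathcal{L}_\ve$ integrates only over $\Omega$, the effective truncation of the nonlocal kernel near $\partial\Omega$ acts as a repulsion that must be shown to translate into the one-sided viscosity inequality $-H(\nabla W, x) \geq \Lambda$ on $\partial \Omega$ rather than an equality, as reflected in the relaxed formulation of \eqref{HJ_lim_eq}.
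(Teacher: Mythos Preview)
Your broad architecture---Cole--Hopf transformation, half-relaxed limits of $W_\ve$, perturbed test function with the cell eigenfunction $\varphi(\cdot,p,x)$ as corrector---is exactly the route the paper takes for the \emph{upper} bound $\limsup_{\ve\to 0}\lambda_\ve\le -\Lambda$. Your identification of Lemma~\ref{HighTechLemma} as the crux, and of the monotonicity of $W\mapsto e^{-W/\ve}$ as the device that makes the nonlocal perturbed-test-function comparison work, are both on target.

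There are, however, two structural differences worth flagging. First, the paper does \emph{not} argue symmetrically via $W^*$ and $W_*$. The lower bound $\liminf_{\ve\to 0}\lambda_\ve\ge -\Lambda$ is obtained directly and cheaply: one plugs the explicit test function $v_\ve(x)=e^{-W(x)/\ve}\varphi(x/\ve,\nabla W(x),x)$, for arbitrary $W\in C^1(\overline\Omega)$, into the sup-formula \eqref{Pr_e_v_supformula_bis}, expands, and recovers $\liminf\lambda_\ve\ge \min_{\overline\Omega}H(\nabla W,x)$; optimizing over $W$ gives $-\Lambda$ via \eqref{Lambda_pershaformulka}. This sidesteps any supersolution argument for $W_*$ and, in particular, the delicate boundary analysis you anticipate. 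The subsolution property of $W^*$ in $\Omega$ alone then suffices, since the paper invokes the characterization of $\Lambda$ as the minimal level admitting an interior subsolution.

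Second, your suggestion to use ``a maximizing approximate positive generalized eigenfunction'' from \eqref{Pr_e_v_supformula} when $\lambda_\ve$ is not a principal eigenvalue is problematic. The functions produced by the sup-formula satisfy only the one-sided inequality $\mathcal{L}_\ve v\ge(\lambda_\ve-\delta)v$, which is the wrong direction for a supersolution argument on $W_*$, and several steps in Lemma~\ref{HighTechLemma} (notably the Jensen step after taking logarithms of the eigenvalue identity) use the \emph{equation}. The paper instead proves everything first under the extra hypothesis \eqref{zaiva_umova} guaranteeing a genuine principal eigenfunction (Theorem~\ref{localno_periodychna_teorema}), and then removes this hypothesis by a $\delta$-regularization of $a$: replacing $a$ by $a^{(\delta)}$ so that both $\lambda_\ve^{(\delta)}$ and $H^{(\delta)}(p,x)$ are honest principal eigenvalues, applying Theorem~\ref{localno_periodychna_teorema}, and letting $\delta\to 0$ using $|\lambda_\ve^{(\delta)}-\lambda_\ve|\le\delta$ and $|\Lambda^{(\delta)}-\Lambda|\le\delta$.
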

 \noindent
Both the equation and the boundary condition in \eqref{HJ_lim_eq} are understood in the viscosity sense, see, e.g.,  \cite{CraIshLio1992}.
 It follows from the definition \eqref{Hamilt_all_p_sup_dep_on_x} that function $H(p,x)$ is continuous, $H\in C(\mathbb{R}^d\times\overline{\Omega})$,
 concave in $p$ and $H(p,x)\to -\infty$ uniformly in $x\in \overline{\Omega}$
as $|p|\to \infty$. Then (see, e.g., \cite{Cap-DolLio1990})  there is a unique $\Lambda$
such that problem \eqref{HJ_lim_eq} has a continuous viscosity solution. Moreover,  the additive eigenvalue $\Lambda$  can be calculated by the following formula
\begin{equation}
\label{Lambda_pershaformulka}
\Lambda = \inf_{W\in C^1(\overline{\Omega})}\max_{x\in\overline{\Omega}} - H(\nabla W(x),x).
\end{equation}
Yet another representation for  $\Lambda$ is
given by minimization of action functional for the Lagrangian $L(q,x)=\max_{p\in\mathbb{R}^d} (q\cdot p+H(p,x))$,
\begin{equation}
\label{Diya}
\Lambda=-\inf\Biggl\{\frac{1}{T}\int_0^T L(\dot\xi(t),\xi(t))dt \Bigl.\Bigr|\ T>0,\xi \in W^{1,\infty}(0,T;\overline{\Omega})\Biggr\}.
\end{equation}

%

%



In the periodic case $H(p,x)$ is independent of $x$, $H(p,x)=H(p)$, and one can show that the additive eigenvalue $\Lambda$ in this case is given by $\Lambda=-\max H(p)$ while solutions $W(x)$ of
\eqref{HJ_lim_eq} are linear functions $W(x)=p\cdot x$ with  $p$ solving $H(p)=-\Lambda$.  This suggests the representation \eqref{factor_u}  for eigenfunctions to be used in the study of periodic case.

\section{Periodic case}
\label{Periodiccase}

In this section we consider a particular case of problem \eqref{start_eq} when functions $\kappa$ and $a$ do not depend
on the slow variables $x$ and $y$, so that the operator $\mathcal{L}_\ve$ has the following form
\begin{equation}
\mathcal{L}_\ve\rho (x)=-\frac{1}{\ve^{d}}\int_\Omega J\Bigl({\frac{x-y}{\ve}}\Bigr)
\kappa\Bigl(\frac{x}{\ve},\frac{y}{\ve}\Bigr)
\rho(y)dy+
a\Bigl(\frac{x}{\ve}\Bigr)\rho(x)
\label{main_operator}
\end{equation}
with
\begin{equation}
\label{umova_odyn}
\kappa\in C(\mathbb{T}^d\times \mathbb{T}^d),\ \kappa>0,\ a\in C(\mathbb{T}^d),
\end{equation}
and $J$ satisfying \eqref{umova_dva}.

Let us introduce the notation
\begin{equation}
m=\min a(x), \quad M=\max a(x),
\label{minmax_a}
\end{equation}
and for any $p\in\mathbb{R}^d$ define

\begin{equation}
\begin{aligned}
H(p)=\sup\Biggl\{\lambda \,\Bigl.\Bigr| \Biggr.&\, \exists \varphi\in C(\mathbb{T}^d),
\, \varphi>0, \ \text{such that} \\
&\Biggl.
-\int_{\mathbb{R}^d} J(\xi-\eta)e^{p\cdot (\xi-\eta)}\kappa(\xi,\eta) \varphi(\eta)d\eta+
 a(\xi) \varphi(\xi)\geq \lambda \varphi(\xi) \ \text{on}\ \mathbb{T}^d \Biggr\}.
 \end{aligned}
\label{Hamilt_all_p_sup}
\end{equation}
It follows from \eqref{Hamilt_all_p_sup} that $H(p)$ is a continuous concave function, taking finite values for all $p\in \mathbb{R}^d$ and  such that
$H(p)\to -\infty$ as $|p|\to \infty$.
Also, by Theorem 2.2 in \cite{LiCovWan2017} one has $H(p)\leq m$.
Let $p_0$ be a maximum point of $H(p)$,
\begin{equation}
\label{maximum_tochka}
H(p_0)=\max H(p).
\end{equation}

The asymptotic behavior of the bottom part of the spectrum of the operator
$\mathcal{L}_\ve$ in the periodic case is described in the following

\begin{thm}
\label{Golovna_periodic}
Assume that conditions \eqref{umova_dva}, \eqref{umova_odyn} are fulfilled.
Let $\lambda_\ve$ be the point of the spectrum of  $\mathcal{L}_\ve$ with the minimal real part.
Then  $\lambda_\ve$ belongs to the essential spectrum of $\mathcal{L}_\ve$ for sufficiently small $\ve$
in the case $H(p_0)= m$, or $\lambda_\ve$ is the principal eigenvalue of $\mathcal{L}_\ve$ in the case $H(p_0)< m$, and
\begin{itemize}
\item[(i)] If  $H(p_0)= m$ then $\lambda_\ve=H(p_0)$ for sufficiently small $\ve$;  
\item[(ii)] If  $H(p_0)< m$ then 
\begin{equation}
\label{PrEv}
\lambda_\ve= 
H(p_0)+\Lambda_1\ve^2+o(\ve^2)\quad  as  \ \ve\to 0,
\end{equation}
where $\Lambda_1$ is the principal eigenvalue of the operator
\begin{equation}
\label{Hom_Dif_Op}
\mathcal{L}_0\rho=-{\rm div}\bigl(A\nabla \rho\bigr) \quad\text{in}\ \Omega, \quad \quad \rho=0\quad\text{on}\ \partial\Omega,
\end{equation}
whose matrix of coefficients $A$ has entries 
\begin{equation}
A_{ij}=-\frac{1}{2}\partial^2_{p_i p_j} H(p_0).
\end{equation}
\end{itemize}
Moreover, in the case $H(p_0)< m$ the operator $\mathcal{L}_\ve$ has  a large or  infinite number of other eigenvalues $\lambda_\ve^{(j)}$ for small $\ve$.
Assuming that eigenvalues of  both $\mathcal{L}_\ve$
and $\mathcal{L}_0$ are arranged by their increasing real parts (and repeated according to their multiplicities), we have
\begin{equation}
\label{OtherEv}
\lambda_\ve^{(j)}= 
H(p_0)+\Lambda_j \ve^2 +o(\ve^2)\quad  as  \ \ve\to 0,
\end{equation}
where $\Lambda_j$ are eigenvalues of the operator $\mathcal{L}_0$.
\end{thm}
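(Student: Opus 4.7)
The strategy is a factorization trick. Introduce the tilted cell operator on the torus,
\[
\mathcal{K}_{p_0}\varphi(\xi) = -\int_{\mathbb{R}^d} J(\xi-\eta)e^{p_0\cdot(\xi-\eta)}\kappa(\xi,\eta)\varphi(\eta)\,d\eta + a(\xi)\varphi(\xi),
\]
whose principal eigenvalue is precisely $H(p_0)$ by the definition \eqref{Hamilt_all_p_sup}. A Krein--Rutman/Perron--Frobenius argument (available exactly in case (ii), where $H(p_0)<m$) produces a positive eigenfunction $\varphi_{p_0}\in C(\mathbb{T}^d)$ and a positive adjoint eigenfunction $\varphi_{p_0}^*$. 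With this at hand, for case (ii) I would make the substitution
\[
\rho_\ve(x) = e^{-p_0\cdot x/\ve}\,\varphi_{p_0}(x/\ve)\,u_\ve(x),
\]
and, after the change of variables $z=(x-y)/\ve$, the equation $\mathcal{L}_\ve\rho_\ve=\lambda_\ve\rho_\ve$ reduces to
\[
\widetilde{\mathcal{L}}_\ve u_\ve(x) := \int_{\mathbb{R}^d}\tilde J_{p_0}(x/\ve,z)\bigl(u_\ve(x)-u_\ve(x-\ve z)\bigr)dz + V_\ve(x)u_\ve(x) = \bigl(\lambda_\ve-H(p_0)\bigr)u_\ve(x),
\]
where $\tilde J_{p_0}(\xi,z)=J(z)e^{p_0\cdot z}\kappa(\xi,\xi-z)\varphi_{p_0}(\xi-z)/\varphi_{p_0}(\xi)$ and $V_\ve\ge 0$ is a ``boundary potential'' concentrated in an $O(\ve)$-neighborhood of $\partial\Omega$ that accounts for the truncation of the convolution to $\Omega$. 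This conjugation is spectrum-preserving (up to the shift $-H(p_0)$) and renders $\widetilde{\mathcal L}_\ve$ nonnegative and ``diffusion-like''.

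Next I carry out a two-scale expansion $u_\ve(x)=u_0(x)+\ve\,\chi(x/\ve)\cdot\nabla u_0(x)+\ve^2 u_2(x,x/\ve)+\cdots$. The $O(1)$ term vanishes automatically thanks to the cell identity $\mathcal{K}_{p_0}\varphi_{p_0}=H(p_0)\varphi_{p_0}$. The $O(\ve)$ term produces a cell problem for $\chi^j$; its solvability, after pairing with $\varphi_{p_0}^*$, is exactly the identity $\nabla_pH(p_0)=0$, which holds because $p_0$ is a maximum of $H$. The $O(\ve^2)$ term, after the same solvability projection, yields
\[
-\div(A\nabla u_0)=\Lambda u_0\quad\text{in }\Omega,\qquad u_0=0\quad\text{on }\partial\Omega,
\]
with $A_{ij}=-\tfrac12\,\partial^2_{p_ip_j}H(p_0)$; this identification is obtained by differentiating the cell eigenvalue equation twice in $p$ at $p_0$ and using $\nabla H(p_0)=0$ to remove the drift. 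The Dirichlet condition emerges from $V_\ve$: since $V_\ve(x)\to+\infty$ for $x$ at $\partial\Omega$ in a suitable sense, any cluster point of $u_\ve$ must vanish on the boundary.

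To upgrade these formal computations to the claimed asymptotics for every eigenvalue $\lambda_\ve^{(j)}$ in the bottom of the spectrum, I use a two-sided bound. For the upper bound, for each Dirichlet eigenpair $(\Lambda_j,\phi_j)$ of $\mathcal{L}_0$ I construct a quasi-mode $\rho_\ve^{(j)}(x)=e^{-p_0\cdot x/\ve}\varphi_{p_0}(x/\ve)\bigl(\phi_j(x)+\ve\chi(x/\ve)\cdot\nabla\phi_j(x)\bigr)\eta_\ve(x)$, with $\eta_\ve$ a cutoff localizing away from $\partial\Omega$ at distance $O(\ve|\log\ve|)$ to control the boundary layer, and verify $(\mathcal{L}_\ve-H(p_0)-\Lambda_j\ve^2)\rho_\ve^{(j)}=o(\ve^2)$ in $L^2$. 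A symmetrization by the weight $\varphi_{p_0}\varphi_{p_0}^*$ makes $\widetilde{\mathcal{L}}_\ve$ self-adjoint (up to an exponentially small error), and min-max then gives $\lambda_\ve^{(j)}\le H(p_0)+\Lambda_j\ve^2+o(\ve^2)$. For the lower bound, I take any sequence $(\lambda_\ve^{(j)},\rho_\ve^{(j)})$ with $(\lambda_\ve^{(j)}-H(p_0))/\ve^2$ bounded, pass to the limit in the transformed problem (using equicontinuity gained from $\tilde J_{p_0}$ having a continuous positive core), and identify the limit as a Dirichlet eigenfunction of $\mathcal{L}_0$, so the limits exhaust $\{\Lambda_j\}$. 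Case (i) is settled by the sandwich $H(p_0)\le\lambda_\ve\le m$: the upper bound follows from testing the defining inequality in \eqref{Pr_e_v_supformula} near a point where $a(\cdot/\ve)$ attains $m$, and the lower bound from the sub-solution $v=e^{-p_0\cdot x/\ve}\varphi_{p_0}(x/\ve)$ (with a limiting procedure $p\to p_0$ if $\varphi_{p_0}$ fails to exist); membership of $H(p_0)=m$ in the essential spectrum is deduced from the edge of $\sigma(a(\cdot/\ve))=[m,M]$, noting $\mathcal{L}_\ve$ is a compact perturbation of multiplication by $a(\cdot/\ve)$.

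The principal difficulty will be the nonlocal boundary behavior: unlike in elliptic homogenization, $\widetilde{\mathcal{L}}_\ve$ ``sees'' outside $\Omega$, so justifying rigorously that $u_0\in H^1_0(\Omega)$ and proving $L^2$-compactness of the renormalized eigenfunctions require careful boundary-layer estimates that rely essentially on the super-exponential decay in \eqref{umova_dva}. A secondary technical point is to set up a two-scale/compactness machinery that is robust for nonlocal, non-symmetric kernels carrying the exponential weight $e^{p_0\cdot z}$, in order to obtain convergence of the full low-lying spectrum and not just the principal eigenvalue.
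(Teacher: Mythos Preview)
Your factorization $\rho_\ve=e^{-p_0\cdot x/\ve}\varphi_{p_0}(x/\ve)u_\ve$, the identification $A_{ij}=-\tfrac12\partial^2_{p_ip_j}H(p_0)$ via differentiating the cell problem, and the treatment of case~(i) all coincide with the paper's approach. The paper likewise rewrites the truncation as extension-by-zero (equivalent to your $V_\ve$) and derives the transformed operator $\tilde{\mathcal L}_\ve$.

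The gap is in case~(ii), in the step ``a symmetrization by the weight $\varphi_{p_0}\varphi_{p_0}^*$ makes $\tilde{\mathcal L}_\ve$ self-adjoint (up to an exponentially small error), and min--max then gives\dots''. This is false for generic $J,\kappa$. With $Q(\xi,\eta)=\varphi_{p_0}^*(\xi)\,J(\xi-\eta)e^{p_0\cdot(\xi-\eta)}\kappa(\xi,\eta)\,\varphi_{p_0}(\eta)$, what the cell eigenpair actually delivers is only the \emph{balance} identity
\[
\int_{\mathbb R^d} Q(\xi,\eta)\,d\eta=\int_{\mathbb R^d} Q(\eta,\xi)\,d\eta,
\]
not the pointwise symmetry $Q(\xi,\eta)=Q(\eta,\xi)$. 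The balance suffices for the energy identity (hence for the a~priori $L^2$ bound and for compactness), but it does \emph{not} make $\tilde{\mathcal L}_\ve$ self-adjoint in any weighted $L^2$, so variational min--max is unavailable. For a genuinely non-self-adjoint operator, quasi-modes control only the pseudospectrum and do not by themselves produce the eigenvalue upper bound you claim.

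The paper circumvents this by proving \emph{resolvent convergence}: it shows that $(\tilde{\mathcal L}_\ve+1)^{-1}\to(\mathcal L_0+1)^{-1}$ strongly in $L^2$, with a compactness lemma (functions obeying $\int\!\!\int_{|z|\le r_0\ve}|v_\ve(x+z)-v_\ve(x)|^2\le C\ve^{d+2}$ are $L^2$-precompact with limits in $H^1_0(\Omega)$) replacing your ``equicontinuity'' step. From this it deduces uniform bounds on $(\mu I-\tilde{\mathcal L}_\ve)^{-1}$ away from $\{\Lambda_k\}$ and the \emph{compact} convergence of the Riesz spectral projectors $\Pi_\ve(\omega)=\tfrac{1}{2\pi i}\oint_{\partial\omega}(\mu I-\tilde{\mathcal L}_\ve)^{-1}d\mu$, which forces $\dim\Pi_\ve(\omega)L^2=\dim\Pi_0(\omega)L^2$ for small $\ve$. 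That projector argument is what gives both existence of eigenvalues near each $\Lambda_j$ and the matching of multiplicities, without any self-adjointness. Your lower-bound compactness idea is close to the paper's, but for the upper bound and multiplicity you should replace the min--max step by this resolvent/projector route.
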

\begin{rem}
\label{Napryklad}
The following example inspired by \cite{Cov2010} shows that the case $H(p_0)=m$ and $H(p_0)<m$ do occur. Assume that $d\geq 3$, $\kappa=1$ and consider $J(z)$ given by
\begin{equation*}
J(z)=\frac{\mu e^{-|z|^2}}{\sum_{l\in \mathbb{Z}^d} e^{-|z+l|^2}},
\end{equation*}
where $\mu>0$. Then $\int_{\mathbb{R}^d} J(x-y)\varphi(y)dy=\mu\int_{\mathbb{T}^d} \varphi(y)dy$ for every  $\varphi\in C(\mathbb{T}^d)$.
Let $a(x)$ be a smooth periodic function strictly positive in $\mathbb{R}^d\setminus \mathbb{Z}^d$ and
such that $a(0)=0$ and $a(x)>\tau |x|^2$ ($\tau>0$) in a neighborhood of zero.
For such $J$, $\kappa$ and $a$,  we have $m=0$, while $H(0)=0$ if $\mu$ is small and $H(0)<0$ if $\mu$ is sufficiently large.
Indeed, according to  \cite{LiCovWan2017} it always holds that  $H(0)\leq 0$ and $H(0)<0$ iff $\exists$ $\varphi\in C(\mathbb{T}^d)$, $\varphi>0$ satisfying $-\mu\int_{\mathbb{T}^d} \varphi(y)dy+a(x)\varphi =H(0)\varphi$, i.e.
 $\varphi=\frac{1}{a(x)-H(0)}$ (up to multiplication by a positive constant) and $\int_{\mathbb{T}^d}\frac{\mu}{a(y)-H(0)} dy=1$. Thus $H(0)<0$ iff $\int_{\mathbb{T}^d}\frac{\mu}{a(x)} dx>1$.
\end{rem}

\subsection{Problem reduction}


Similarly to spectral problems for differential operators with periodic oscillating coefficients \cite{Capdeb1998}, problem \eqref{start_eq}  can be transformed via a factorization trick to a form more convenient for the asymptotic analysis.
First we set
\begin{equation}
	\rho_\ve(x)=e^{-p\cdot x/\ve} u_\ve(x) \ \ \text{in} \ \Omega,
	\label{factor_u}
\end{equation}
so that the new unknown $u_\ve(x)$ satisfies
\begin{equation}
	-\frac{1}{\ve^{d}}\int_{\Omega} J\Bigl(\frac{x-y}{\ve}\Bigr)e^{\frac{1}{\ve}p\cdot (x-y)}\kappa\Bigl(\frac{x}{\ve},\frac{y}{\ve}\Bigr) u_\ve(y)dy+
	a\Bigl(\frac{x}{\ve}\Bigr) u_\ve (x)= \lambda_\ve  u_\ve (x) \quad \text{in} \ \Omega.
	\label{Persha_diya_baletu}
\end{equation}
Then consider a periodic counterpart of \eqref{Persha_diya_baletu} in the rescaled
variables $\xi=x/\ve$, $\eta=y/\ve$,
\begin{equation}
	-\int_{\mathbb{R}^d} J(\xi-\eta)e^{p\cdot (\xi-\eta)}\kappa(\xi,\eta) \varphi(\eta)d\eta+
	a(\xi) \varphi(\xi)= H(p) \varphi(\xi) \quad \text{in} \ \mathbb{T}^d.
	\label{direct_eq}
\end{equation}
Specifically, we are interested in the principal eigenvalue $H(p)$ (with the minimal real part), which, if exists, is real and simple,
its corresponding eigenfunction is sign preserving and thus can be chosen strictly positive, $\varphi>0$.
It is known \cite{LiCovWan2017} that  $H(p)$
given by \eqref{Hamilt_all_p_sup} 
is always the principal eigenvalue of the problem \eqref{direct_eq}
provided that $H(p)<m$ (otherwise $H(p)=m$, it lies at the bottom point of the essential spectrum and  principal eigenvalue does not exist).  In the case $H(p)<m$ the adjoint problem
\begin{equation}
	-\int_{\mathbb{R}^d} J(\eta-\xi)e^{p\cdot (\eta-\xi)}\kappa(\eta,\xi) \varphi^*(\eta)d\eta+
	a(\xi) \varphi^*(\xi)= H(p) \varphi^*(\xi)\ \quad \text{in} \ \mathbb{T}^d
	\label{adjoint_eq}
\end{equation}
 has the same principal eigenvalue $H(p)$ and there also is a positive eigenfunction  $\varphi^*$.

Now we perform another change of the unknown
\begin{equation}
	u_\ve(x)=\varphi\Bigl(\frac{x}{\ve}\Bigr) v_\ve(x),
	\label{factor_v}
\end{equation}
where $\varphi$ is a positive solution of  \eqref{direct_eq},  and introduce 
an affine change of the spectral parameter
\begin{equation}
\label{muuuuuu}
\mu_\ve=\frac{1}{\ve^2} \left(\lambda_\ve-H(p)\right)
\end{equation}
to transform
\eqref{start_eq}  to the spectral problem

\begin{equation}
	\tilde{\mathcal{L}}_\ve v_\ve=\mu_\ve v_\ve \quad \text{in}\ \Omega,
	\label{or_a_eq}
\end{equation}
where
\begin{equation}
	\tilde{\mathcal{L}}_\ve v=-\frac{1}{\ve^{d+2}}\int_{\Omega} K\Bigl(\frac{x}{\ve},\frac{y}{\ve}\Bigr)
v(y)dy+\frac{1}{\ve^{d+2}}\int_{\mathbb{R}^d} K\Bigl(\frac{x}{\ve},\frac{y}{\ve}\Bigr) dy\, v(x),
 \label{krapka}
\end{equation}
and
\begin{equation}
K(x,y)=\frac{1}{\varphi(x)} J(x-y)e^{p\cdot (x-y)}\kappa(x,y) \varphi(y).
\label{ker_K}
\end{equation}
In what follows we will also deal with the kernel
\begin{equation}
	Q(x,y)=\varphi^*(x) \varphi(x) K(x,y),
	\label{ker_K'}
\end{equation}
by virtue of \eqref{direct_eq}--\eqref{adjoint_eq} this kernel  satisfies the following important property
\begin{equation}
\label{super_garne_property}
\int_{\mathbb{R}^d} Q(x,y)dy
	=\int_{\mathbb{R}^d} Q(y,x)dy\quad \forall x\in \mathbb{R}^d.
\end{equation}

Since the operators in problems \eqref{direct_eq} and \eqref{adjoint_eq} analytically depend on $p_i$, and $H(p)$ is a simple isolated eigenvalue if $H(p)<m$, then by perturbation theory \cite{Kato1966} $H(p)$ is an analytic function of $p_i$ and eigenfunctions $\varphi$, $\varphi^*$ can also be chosen analytic in $p_i$, $i=1,\dots, d$.


\begin{prop}
	Let $p_0$ be the maximum point of $H(p)$, and assume that $H(p_0)<m$. Then
the function $\chi_i^*:=\partial_{p_i}log\varphi^*\big|_{p=p_0}$ satisfies
\begin{equation}
	\int_{\mathbb{R}^d} Q\Bigl(\xi+z,\xi\Bigr)
	\Bigl(z_i+\chi_i^*(\xi+z)-\chi_i^*(\xi)\Bigr)dz=0 \quad \text{in} \ \mathbb{T}^d, \ i=1,\dots, d,
	\label{komirkova_zadacha}
\end{equation}
and
\begin{equation}
	\partial^2_{p_i p_j} H(p_0)=-\frac{1}{\int_{\mathbb{T}^d} \varphi(\xi)\varphi^*(\xi) d\xi}
	\int_{\mathbb{T}^d}	\int_{\mathbb{R}^d} Q\Bigl(\xi+z,\xi\Bigr)
	\Bigl(z_iz_j+2\chi_i^*(\xi+z)z_j\Bigr)dzd\xi.
	\label{do_komirkovoi}
\end{equation}
\end{prop}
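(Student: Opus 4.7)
The plan is to apply Kato–Rellich analytic perturbation theory to the $p$\nobreakdash-parametric family \eqref{direct_eq}–\eqref{adjoint_eq}. Because $H(p_0)<m$, the principal eigenvalue $H(p)$ is simple and isolated for $p$ near $p_0$, so (as noted in the paragraph preceding the proposition) $H$, the right eigenfunction $\varphi=\varphi(\cdot,p)$, and the left eigenfunction $\varphi^*=\varphi^*(\cdot,p)$ can be chosen analytic in $p$ on a neighborhood of $p_0$. In particular $\partial_{p_i}\varphi^*$ and $\partial^2_{p_ip_j}\varphi^*$ are well defined, and the first-order optimality condition at the interior maximum $p_0$ gives $\nabla H(p_0)=0$. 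These are the only facts about the eigenpair I will use.

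For \eqref{komirkova_zadacha} I would differentiate the adjoint equation \eqref{adjoint_eq} once in $p_i$ and evaluate at $p=p_0$. Writing $\partial_{p_i}\varphi^*=\chi_i^*\varphi^*$ and using $\partial_{p_i}H(p_0)=0$, the resulting identity reads
\[
(L^*-H)[\chi_i^*\varphi^*](\xi)=\int_{\mathbb{R}^d} J(\eta-\xi)(\eta_i-\xi_i)e^{p_0\cdot(\eta-\xi)}\kappa(\eta,\xi)\varphi^*(\eta)\,d\eta.
\]
Expanding the left-hand side and eliminating the $a(\xi)-H$ factor by means of \eqref{adjoint_eq} itself collapses it to $\int J(\eta-\xi)e^{p_0\cdot(\eta-\xi)}\kappa(\eta,\xi)\varphi^*(\eta)(\chi_i^*(\xi)-\chi_i^*(\eta))\,d\eta$. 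Subtracting, changing variables $z=\eta-\xi$, and multiplying by $\varphi(\xi)$ to assemble the kernel $Q$ defined in \eqref{ker_K'} yields \eqref{komirkova_zadacha}.

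For \eqref{do_komirkovoi} the plan is to differentiate \eqref{adjoint_eq} twice in $p_i,p_j$, evaluate at $p_0$ (using $\nabla H(p_0)=0$), and pair the resulting equation against $\varphi$ over $\mathbb{T}^d$. By the Fredholm alternative the leading term drops,
\[
\int_{\mathbb{T}^d}\varphi\,(L^*-H)[\partial^2_{p_ip_j}\varphi^*]\,d\xi=\int_{\mathbb{T}^d}\partial^2_{p_ip_j}\varphi^*\,(L-H)[\varphi]\,d\xi=0,
\]
and the remaining terms, after substituting $\partial_{p_i}\varphi^*=\chi_i^*\varphi^*$, $\partial_{p_j}\varphi^*=\chi_j^*\varphi^*$ and performing the change of variables $z=\eta-\xi$, give
\[
\partial^2_{p_ip_j}H\int_{\mathbb{T}^d}\varphi\varphi^*\,d\xi=-\int_{\mathbb{T}^d}\!\int_{\mathbb{R}^d} Q(\xi+z,\xi)\bigl[z_iz_j+z_i\chi_j^*(\xi+z)+z_j\chi_i^*(\xi+z)\bigr]\,dz\,d\xi.
\]
To reach the form \eqref{do_komirkovoi}, one shows that the two cross integrals coincide; this follows by testing the cell problem \eqref{komirkova_zadacha} against appropriate periodic functions built out of $\chi^*$ and combining with the balance property \eqref{super_garne_property}, which eliminates the antisymmetric contributions.

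The main obstacle, as usual in second-order perturbation arguments, is not the formal computation but the verification of this last symmetrization: the expression arising directly from the second variation of $L^*$ is \emph{not} manifestly symmetric in $i,j$, and one must use \eqref{komirkova_zadacha} (effectively, the fact that $\chi_i^*$ is defined only up to an additive constant and that $Q$ satisfies \eqref{super_garne_property}) to match the precise form written in \eqref{do_komirkovoi}. Everything else—the analytic dependence of the eigenpair guaranteed by Kato, the change of variables $z=\eta-\xi$, the finiteness of the moment integrals provided by the decay assumption \eqref{umova_dva}, and the Fredholm solvability condition—is routine.
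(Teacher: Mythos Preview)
Your proposal is correct and follows exactly the paper's own (two-line) argument: differentiate \eqref{adjoint_eq} once or twice in $p$ at $p_0$, multiply by $\varphi(\xi)$, integrate over $\mathbb{T}^d$, and change variables $z=\eta-\xi$. The symmetrization you flag---passing from the naturally produced term $z_i\chi_j^*(\xi+z)+z_j\chi_i^*(\xi+z)$ to the paper's $2\chi_i^*(\xi+z)z_j$---is not spelled out in the paper either; it is immaterial since only the quadratic form $A^*_{ij}q_iq_j$ is used downstream, and if you want the identity itself it does follow from \eqref{komirkova_zadacha} and \eqref{super_garne_property} by the same manipulations as in the proof of the subsequent lemma.
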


\begin{proof}
To derive \eqref{komirkova_zadacha} we differentiate \eqref{adjoint_eq} with respect to $p_i$ at $p=p_0$, then multiply by $\varphi(\xi)$ and change the variables in the integral by setting $z=\eta-\xi$.
Similarly, \eqref{do_komirkovoi} is obtained by taking second derivatives of \eqref{adjoint_eq} and integrating the result over  $\mathbb{T}^d$ with the weight $\varphi(\xi)$.
\end{proof}

\begin{lem}
	Let $p_0$ be the maximum point of $H(p)$, and assume that $H(p_0)<m$. Then
\begin{equation}
		\partial^2_{p_i p_j} H(p_0)q_iq_j<0 \quad\forall q\in \mathbb{R}^d\setminus\{0\}.
		\label{lemma22}
	\end{equation}
\end{lem}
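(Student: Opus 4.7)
The plan is to contract \eqref{do_komirkovoi} with $q_iq_j$ and show the resulting integral is strictly positive for $q\ne 0$. Writing $\psi(\xi):=q_i\chi_i^*(\xi)$, the inequality \eqref{lemma22} is equivalent to
\[
J := \int_{\mathbb{T}^d}\int_{\mathbb{R}^d} Q(\xi+z,\xi)\bigl[(q\cdot z)^2+2(q\cdot z)\psi(\xi+z)\bigr]\,dz\,d\xi > 0.
\]
The key observation is that $J$ can be recognised as arising from a complete square associated with the cell problem \eqref{komirkova_zadacha}. Setting $F(\xi,z) := q\cdot z + \psi(\xi+z) - \psi(\xi)$, the natural candidate is
\[
I := \int_{\mathbb{T}^d}\int_{\mathbb{R}^d} Q(\xi+z,\xi)\, F(\xi,z)^2\, dz\, d\xi \ge 0,
\]
and my goal would be to prove the identity $I=J$.

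The reduction $I=J$ proceeds in two steps. First, contracting \eqref{komirkova_zadacha} with $q_i$, multiplying by $\psi(\xi)$, and integrating over $\mathbb{T}^d$ yields
\[
\int_{\mathbb{T}^d}\!\int_{\mathbb{R}^d} Q(\xi+z,\xi)(q\cdot z)\psi(\xi)\,dz\,d\xi = \int_{\mathbb{T}^d}\!\int_{\mathbb{R}^d} Q(\xi+z,\xi)\bigl[\psi(\xi)^2-\psi(\xi)\psi(\xi+z)\bigr]\,dz\,d\xi.
\]
Substituting this into the expansion of $F^2$ (six terms) eliminates the cross-term in $(q\cdot z)\psi(\xi)$ and collapses the remainder to
\[
I = J + \int\!\!\int Q(\xi+z,\xi)\,\psi(\xi+z)^2\, dz\,d\xi - \int\!\!\int Q(\xi+z,\xi)\,\psi(\xi)^2\, dz\,d\xi.
\]
The crucial second step is to show the last two integrals cancel via \eqref{super_garne_property}. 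For the subtracted one, $\int_{\mathbb{R}^d}Q(\xi+z,\xi)\,dz = \int_{\mathbb{R}^d}Q(w,\xi)\,dw = \int_{\mathbb{R}^d}Q(\xi,w)\,dw$ by \eqref{super_garne_property}, giving $\int_{\mathbb{T}^d}\psi(\xi)^2\int_{\mathbb{R}^d}Q(\xi,w)\,dw\,d\xi$. For the other, the substitution $\eta=\xi+z$ combined with the diagonal periodicity $Q(x+l,y+l)=Q(x,y)$ and periodicity of $\psi$, after splitting $\eta=\eta_0+l$ with $\eta_0\in\mathbb{T}^d$, $l\in\mathbb{Z}^d$, and unfolding the sum over $l$, produces the same expression. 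This symmetry-driven cancellation, which depends essentially on \eqref{super_garne_property}, is the main obstacle and encodes that $\varphi$ and $\varphi^*$ solve coupled direct and adjoint spectral problems. The outcome is $I=J\ge 0$.

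To exclude equality when $q\ne 0$, suppose $I=0$. Since $J(0)>0$ and the factors $\varphi,\varphi^*,\kappa$ are continuous and positive, $Q(\xi+z,\xi)>0$ for all $\xi\in\mathbb{T}^d$ and all $z$ in some neighbourhood $U$ of the origin; continuity of $\psi$ (which follows from the analyticity of $\varphi^*$ in $p$ guaranteed by perturbation theory, invoked just above the proposition) then forces $\psi(\xi+z)=\psi(\xi)-q\cdot z$ on $\mathbb{T}^d\times U$. Iterating this functional equation in $z$ extends it to all displacements $h\in\mathbb{R}^d$; choosing $h\in\mathbb{Z}^d$ and invoking periodicity of $\psi$ forces $q\cdot h=0$ for every lattice vector, whence $q=0$, a contradiction. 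Consequently $J=I>0$ and $\partial^2_{p_ip_j}H(p_0)$ is negative definite.
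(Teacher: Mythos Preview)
Your proof is correct and follows essentially the same route as the paper: both arguments contract \eqref{do_komirkovoi} with $q_iq_j$, introduce the cell-problem corrector $\psi=q_i\chi_i^*$, and use \eqref{komirkova_zadacha} together with the balance identity \eqref{super_garne_property} (and the diagonal periodicity of $Q$) to rewrite the quadratic form as the complete square $\int\!\int Q(\xi+z,\xi)\bigl(q\cdot z+\psi(\xi+z)-\psi(\xi)\bigr)^2\,dz\,d\xi$. The strictness argument via periodicity of $\psi$ is also identical to the paper's.
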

\noindent Hereafter we assume summation over repeated indices.
\begin{proof}  By \eqref{do_komirkovoi} we have to show  positive difiniteness of the matrix  $A^\ast$
with entries
\begin{equation}
	 A_{ij}^*=
\int_{\mathbb{T}^d}\int_{\mathbb{R}^d} Q\Bigl(\xi+z,\xi\Bigr)
\Bigl(\frac{1}{2}z_iz_j+\chi_i^*(\xi+z)z_j\Bigr)dzd\xi,
\label{A_ij^star}
\end{equation}
where $\chi^*_i\in L^2(\mathbb{T}^d)$ are solutions of problems \eqref{komirkova_zadacha}.  To this
end for any $q\in \mathbb{R}^d$ we write, using  \eqref{komirkova_zadacha},
\begin{equation*}
\begin{aligned}
2A^\ast_{ij}q_iq_j=\int_{\mathbb{T}^d}\int_{\mathbb{R}^d} &Q(\xi+z,\xi)\left(
q_iz_i q_j z_j+2 q_i (\chi^*_i(\xi+z)-\chi^*_i(\xi)) q_jz_j \right)dz d\xi\\
&-2q_i q_j \int_{\mathbb{T}^d}\int_{\mathbb{R}^d}Q(\eta,\xi)\chi^*_i(\xi)(\chi^*_j(\eta)-\chi^*_j(\xi)) d\eta d\xi.
\end{aligned}
\end{equation*}
Thanks to \eqref{super_garne_property} we have
\begin{equation}
\begin{aligned}
-\int_{\mathbb{T}^d}\int_{\mathbb{R}^d}Q(\eta,\xi)\left(\chi^*_i(\xi)(\chi^*_j(\eta)-\chi^*_j(\xi)) +\chi^*_j(\xi)(\chi^*_i(\eta)-\chi^*_i(\xi)) \right)d\eta d\xi=\\
\int_{\mathbb{T}^d}\int_{\mathbb{R}^d}Q(\eta,\xi)\chi^*_i(\xi)\chi^*_j(\xi)d\eta d\xi +\int_{\mathbb{T}^d}\int_{\mathbb{R}^d}Q(\xi,\eta)\chi^*_i(\xi)\chi^*_j(\xi)d\eta d\xi \\
-\int_{\mathbb{T}^d}\int_{\mathbb{R}^d}Q(\eta,\xi)\left(\chi^*_i(\xi)\chi^*_j(\eta) +\chi^*_i(\eta)\chi^*_j(\xi)\right)d\eta d\xi,
\end{aligned}
\end{equation}
also
\begin{equation*}
\begin{aligned}
\int_{\mathbb{T}^d}\int_{\mathbb{R}^d}&Q(\xi,\eta)\chi^*_i(\xi)\chi^*_j(\xi)d\eta d\xi=\sum_{l\in\mathbb{Z}^d}\int_{\mathbb{T}^d\times\mathbb{T}^d}Q(\xi,\eta+l)\chi^*_i(\xi)\chi^*_j(\xi)d\eta d\xi
\\&=\sum_{l\in\mathbb{Z}^d}\int_{\mathbb{T}^d\times\mathbb{T}^d}Q(\xi-l,\eta)\chi^*_i(\xi-l)\chi^*_j(\xi-l)d\eta d\xi=
\int_{\mathbb{T}^d}\int_{\mathbb{R}^d}Q(\eta,\xi)\chi^*_i(\eta)\chi^*_j(\eta)d\eta d\xi.
\end{aligned}
\end{equation*}
Therefore	
$$
2A^\ast_{ij}q_iq_j=\int_{\mathbb{T}^d}\int_{\mathbb{R}^d} Q(\xi+z,\xi)q_i(z_i+\chi^*_i(\xi+z)-\chi^*_i(\xi))q_j(z_j+\chi^*_j(\xi+z)-\chi^*_j(\xi))d z d\xi\geq 0.
$$
The inequality is strict unless $q=0$, otherwise $q_i\chi^*_i(x)$ is a linear function whose gradient equals  $-q$ and hence  $q_i\chi^*_i(x)$ cannot be periodic if $q\not=0$.
\end{proof}

From now on we will assume that $p=p_0$, in particular,
this assumption will always be tacitly made when we refer to
\eqref{direct_eq}--\eqref{adjoint_eq}  and \eqref{muuuuuu}--\eqref{komirkova_zadacha}.

\subsection{Resolvent convergence}
\label{sec_homohomo}
Assume that $H(p_0)<m$, and consider, for a given $f_\ve\in L^2(\Omega)$ the following problem
\begin{equation}
	\tilde{\mathcal{L}}_\ve v_\ve+ v_\ve=f_\ve\quad \text{in}\ \Omega,\quad
	 v_\ve=0\quad \text{in}\ \mathbb{R}^d\setminus\Omega,
	\label{inhomog_pr}
\end{equation}
where $\tilde{\mathcal{L}}_\ve$ is given by \eqref{krapka}.
Since $v_\ve=0$ in $ \mathbb{R}^d\setminus\Omega$ we can rewrite
\begin{equation}
	\tilde{\mathcal{L}}_\ve v_\ve=-\frac{1}{\ve^{d+2}}\int_{\mathbb{R}^d} K\Bigl(\frac{x}{\ve},\frac{y}{\ve}\Bigr)
	\Bigl(v_\ve(y)-v_\ve(x)\Bigr)dy.	
	\label{inhomog_pr_op}
\end{equation}

\begin{thm}
	\label{pro_resolventy}
There is a unique solution $ v_\ve(x)$ of the problem \eqref{inhomog_pr} in $L^2(\mathbb{R}^d)$ for any $f_\ve \in L^2(\Omega)$.
If $\|f_\ve\|_{L^2(\Omega)}\leq C$ with a constant $C$ independent of $\ve$ then the sequence of solutions $v_\ve$
contains a subsequence converging strongly in $L^2(\mathbb{R}^d)$ as $\ve\to 0$. If additionally $f_\ve\to f$
strongly in $L^2(\Omega)$ then the whole sequence of solutions $v_\ve$ converges
  to the unique solution of the problem
\begin{align}
	- A_{ij}\partial^2_{x_{i}x_{j}}v(x)+v(x)&=f(x)\quad \text{in}\ \Omega,
	\label{limit_pr_eq}
	\\
	v(x)&=0 \quad\quad \ \text{on}\ \partial\Omega,
	\label{limit_pr_bc}
\end{align}
extended by setting $v=0$ in $\mathbb{R}^d\setminus \Omega$.
\end{thm}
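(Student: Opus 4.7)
My plan has three parts: uniform solvability via a weighted bilinear form, strong $L^2$-compactness via a nonlocal $H^1$-type bound, and identification of the limit using the adjoint corrector $\chi^*_i$ from \eqref{komirkova_zadacha}.

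\textbf{Step 1 (Solvability and uniform estimate).} I will test \eqref{inhomog_pr} with $\varphi^*(x/\ve)\varphi(x/\ve)w(x)$ to obtain the bilinear form
\[
\mathcal{A}_\ve(v,w)=-\frac{1}{\ve^{d+2}}\int\!\!\int Q\big(\tfrac{x}{\ve},\tfrac{y}{\ve}\big)(v(y)-v(x))w(x)\,dy\,dx+\int\varphi^*(x/\ve)\varphi(x/\ve)\,v\,w\,dx.
\]
Choosing $w=v$ and applying the balance identity \eqref{super_garne_property} (which implies $\int v(x)^2\int_y Q(x/\ve,y/\ve)\,dy\,dx=\int v(y)^2\int_x Q(x/\ve,y/\ve)\,dx\,dy$), the mixed contributions reorganize into a non-negative Dirichlet-type expression
\[
\mathcal{A}_\ve(v,v)=\frac{1}{2\ve^{d+2}}\int\!\!\int Q\big(\tfrac{x}{\ve},\tfrac{y}{\ve}\big)(v(y)-v(x))^2\,dy\,dx+\int\varphi^*(x/\ve)\varphi(x/\ve)\,v^2\,dx.
\]
Since $\varphi^*\varphi$ is continuous and strictly positive on $\mathbb{T}^d$, this gives $\mathcal{A}_\ve(v,v)\ge c\|v\|_{L^2(\mathbb{R}^d)}^2$ with $c>0$ independent of $\ve$. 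Continuity of $\mathcal{A}_\ve$ on $L^2\times L^2$ (for each fixed $\ve$) together with Lax--Milgram then produce the unique solution $v_\ve\in L^2(\mathbb{R}^d)$, supported in $\overline{\Omega}$, satisfying $\|v_\ve\|_{L^2}\le C\|f_\ve\|_{L^2}$ uniformly in $\ve$.

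\textbf{Step 2 (Compactness).} The same identity provides the uniform nonlocal energy bound $\ve^{-d-2}\int\!\!\int Q(x/\ve,y/\ve)(v_\ve(y)-v_\ve(x))^2\,dxdy\le C$. Because $J(0)>0$ and $J,\kappa,\varphi,\varphi^*$ are continuous and positive, there exist $\delta,c_0>0$ with $Q(\xi,\eta)\ge c_0$ whenever $|\xi-\eta|\le\delta$, and so this dominates the discrete-gradient quantity $\ve^{-d-2}\int\!\!\int_{|x-y|\le\delta\ve}|v_\ve(y)-v_\ve(x)|^2\,dxdy$. Combined with the uniform $L^2$ bound and compact support of $v_\ve$ in $\overline{\Omega}$, a nonlocal Fr\'echet--Kolmogorov compactness criterion (cf.\ \cite{PiaZhi2017}) yields strong convergence $v_\ve\to v$ in $L^2(\mathbb{R}^d)$ along a subsequence; the limit $v$ lies in $H^1_0(\Omega)$.

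\textbf{Step 3 (Identification of the limit).} For $\psi\in C_c^\infty(\Omega)$ I will substitute the corrector-adjusted test function $\phi_\ve(x)=\psi(x)+\ve\chi_i^*(x/\ve)\partial_i\psi(x)$ into $\mathcal{A}_\ve(v_\ve,\phi_\ve)=\int\varphi^*\varphi f_\ve\phi_\ve\,dx$. The symmetrization identity from Step 1 rewrites the bilinear part as $\int v_\ve(x)\big(-\ve^{-d-2}\int Q(y/\ve,x/\ve)(\phi_\ve(y)-\phi_\ve(x))\,dy\big)\,dx$. Substituting $z=(y-x)/\ve$ and Taylor-expanding $\phi_\ve(y)-\phi_\ve(x)$, the formally $O(\ve^{-1})$ contribution vanishes pointwise in $x$ by the cell equation \eqref{komirkova_zadacha}, and the $O(1)$ contribution equals $-B_{ij}(x/\ve)\partial^2_{ij}\psi(x)$ with
\[
B_{ij}(\xi):=\int_{\mathbb{R}^d} Q(\xi+z,\xi)\Big[\tfrac{1}{2}z_iz_j+\chi_i^*(\xi+z)z_j\Big]\,dz,\qquad \int_{\mathbb{T}^d}B_{ij}\,d\xi=A_{ij}\int_{\mathbb{T}^d}\varphi^*\varphi\,d\xi,
\]
by \eqref{do_komirkovoi}. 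Using strong $L^2$-convergence $v_\ve\to v$, weak $L^2$ convergence of $B_{ij}(\cdot/\ve)$ to its periodic mean, and dividing by the constant $\int_{\mathbb{T}^d}\varphi^*\varphi$, I recover the weak formulation of \eqref{limit_pr_eq}. Uniqueness for the limit elliptic problem then upgrades subsequential convergence to the full family $\{v_\ve\}$.

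The principal technical obstacle lies in Step 3: rigorously controlling the Taylor remainder in the nonlocal expansion. The fast decay hypothesis \eqref{umova_dva} of $J$ ensures that all $z$-moments of $Q(\xi+z,\xi)$ are finite and bounded in $\xi$, which dominates the remainder uniformly; combined with the regularity of $\chi_i^*$ (obtained by differentiating the analytic-in-$p$ eigenfunction $\varphi^*$, hence lying in $C(\mathbb{T}^d)$), this legitimizes the pointwise expansion. A secondary subtlety is showing that the weak $L^2$-limit $v$ lies in $H^1_0(\Omega)$ and not only in $H^1(\mathbb{R}^d)$; this uses the zero extension of $v_\ve$ outside $\Omega$ together with the $C^1$-smoothness of $\partial\Omega$ to ensure that the boundary trace of the extension vanishes.
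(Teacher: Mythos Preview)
Your proposal is correct and follows essentially the same route as the paper: the weighted energy identity using $Q$ and the balance relation \eqref{super_garne_property}, the nonlocal $H^1$-type bound yielding strong $L^2$-compactness, and the adjoint corrector test function $\phi_\ve=\psi+\ve\chi_i^*(\cdot/\ve)\partial_i\psi$ to identify the limit via \eqref{komirkova_zadacha}--\eqref{do_komirkovoi}. The only cosmetic differences are that you invoke Lax--Milgram for solvability where the paper uses the Fredholm alternative, and you cite an external nonlocal compactness result where the paper proves the needed compactness directly (Lemma~\ref{Kruto_Lemma}).
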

\begin{proof} It is convenient to extend $f_\ve(x)$ by zero into $\mathbb{R}^d\setminus\Omega$.
Observe that the Fredholm alternative applies to the problem \eqref{inhomog_pr} since the operator on the left hand side is represented as the sum of a compact operator and an invertible one.
To show that there is a solution of  \eqref{inhomog_pr} and to derive an a priori estimate multiply \eqref{inhomog_pr} by $\varphi(\frac{x}{\ve})\varphi^*(\frac{x}{\ve})v_\ve(x)$ and integrate over $\mathbb{R}^d$.
Using \eqref{super_garne_property} we obtain
\begin{equation}
\frac{1}{2\ve^{d+2}}\int_{\mathbb{R}^d}\int_{\mathbb{R}^d} Q\Bigl(\frac{x}{\ve},\frac{y}{\ve}\Bigr)
\left| v_\ve(y)-v_\ve(x)\right|^2  dydx
+\int_{\mathbb{R}^d} \left(\left| v_\ve(x) \right|^2- f_\ve(x)v_\ve(x) \right)\varphi\Bigl(\frac{x}{\ve}\Bigr)\varphi^*\Bigl(\frac{x}{\ve}\Bigr) dx=0
\end{equation}
It follows that problem \eqref{inhomog_pr} cannot have nonzero solution for $f_\ve=0$. Thus \eqref{inhomog_pr} has a unique solution and using the Cauchy-Schwartz inequality we get
\begin{equation}
\int_{\mathbb{R}^d}\int_{\mathbb{R}^d} Q\Bigl(\frac{x}{\ve},\frac{y}{\ve}\Bigr)
		\left|v_\ve(y)-v_\ve(x)\right|^2 dydx
	\leq C\ve^{d+2}, \quad \|v_\ve\|_{L^2(\mathbb{R}^d)}\leq C,
\end{equation}
with  a constant $C$ independant of $\ve$. Due to the fact that $Q(\xi,\eta)=\varphi^*(\xi)J(\xi-\eta)e^{p_0\cdot(\xi-\eta)}\varphi(\eta)$ and $J(0)>0$, we then 
have
\begin{equation}
	\int_{\mathbb{R}^d}dx\int_{|z|\leq r_0\ve}|v_\ve(x+z)-v_\ve(x)|^2dz\leq C\ve^{d+2}
	\label{garnifunktsii}
\end{equation}
for some $r_0>0$ independent of $\ve$.
\begin{lem}
\label{Kruto_Lemma}
Let $v_\ve\in L^2(\mathbb{R}^d)$ be a sequence of functions satisfying \eqref{garnifunktsii} and such that $v_\ve =0$ in $\mathbb{R}^d\setminus {\Omega}$.
Then, up to extracting a subsequence, functions $v_\ve$ converge  strongly in $L^2(\mathbb{R}^d)$ to some limit $v$ as $\ve\to 0$. Moreover
$v\in H^1(\mathbb{R}^d)$ and $v =0$ in $\mathbb{R}^d\setminus{\Omega}$.
\end{lem}

\begin{proof} Without loss of generality  we can assume that $v_\ve\in C^\infty_0(\mathbb{R}^d)$. By Fubini's theorem
$$
\int_0^{r_0\ve}dr\int_{\mathbb{R}^d}dx\int_{|z|=r}|v_\ve(x+z)-v_\ve(x)|^2dS\leq C\ve^{d+2},
$$
therefore there exists $r_\ve$ such that $r_0\ve/2\leq r_\ve \leq  r_0 \ve$ and
\begin{equation}
\int_{\mathbb{R}^d}dx\int_{|z|=r_\ve}|v_\ve(x+z)-v_\ve(x)|^2dS\leq 2C\ve^{d+1}/r_0.
\label{tse_zh_treba}
\end{equation}
Consider functions
$$
\overline{v}_\ve(x)=\frac{1}{|B_1|\, r_\ve^d} \int_{|z|\leq r_\ve}v_\ve(x+z)dz.
$$
where $|B_1|=\frac{\Gamma( \frac{d}{2}+1)}{\pi^{{d}/{2}}}$ is the volume of the unit ball in $\mathbb{R}^d$.
 From \eqref{garnifunktsii} using Jensen's inequality we get
\begin{equation}
\int_{\mathbb{R}^d} | \overline{v}_\ve(x) -v_\ve(x)|^2dx \leq C \ve^2.
\label{NuPochalos'}
\end{equation}
Next observe that  $\forall$ $i=1,\dots, d$,
$$
\partial_{x_i} \overline{v}_\ve(x)= 
 \frac{1}{ |B_1| r_\ve^d}
\int_{|z|= r_\ve} v_\ve(x+z)\nu_i(z) dS
= 
\frac{1}{ |B_1| r_\ve^d}  \int_{|z|= r_\ve} (v_\ve(x+z)-v_\ve(x-z))\nu_i(z) dS,
$$
where $\nu_i(z)={z_i}/{|z|}$ denotes the $i$-th component of the unite outward pointing normal to the $(d-1)$-sphere $|z|= r_\ve$.
Hence, using the Cauchy-Schwarz inequality we obtain
 $$
\left|\partial_{x_i} \overline{v}_\ve(x)\right|^2 \leq \frac{C}{\ve^{d+1}} \int_{|z|= r_\ve}  \left| v_\ve(x+z)-v_\ve(x)\right|^2 dS.
$$
Thus, thanks to \eqref{tse_zh_treba} we have
\begin{equation}
\int_{\mathbb{R}^d} \left|\nabla \overline{v}_\ve(x)\right|^2 dx \leq {C},
\label{NuVse}
\end{equation}
and since functions  $v_\ve$ vanish in $\mathbb{R}^d\setminus\Omega$ it holds that $\overline{v}_\ve=0$ in $\mathbb{R}^d\setminus\Omega^\prime$ for sufficiently small $\ve$,
where $\Omega^\prime$ is any bounded domain containing $\overline{\Omega}$. Then it follows from \eqref{NuVse} that, up to extracting a subsequence, functions $\overline{v}_\ve$ converge weakly
in  $H^1_{0}(\Omega^\prime)$ to a function $v\in H^1(\mathbb{R}^d)$ vanishing in $\mathbb{R}^d\setminus\overline{\Omega}$.
Thanks to the compactness of the embedding $H^1_{0}(\Omega^\prime)\subset L^2(\Omega^\prime)$  and  \eqref{NuPochalos'}
we also have the strong $L^2$-convergence of  functions  $v_\ve$ to $v$.
Finally, since $v=0$ in $\mathbb{R}^d\setminus\overline{\Omega}$ and $\partial\Omega$ is $C^1$-smooth, $v=0$ on $\partial \Omega$.
 Lemma is proved. \end{proof}
We continue the proof of Theorem \ref{pro_resolventy}. By  Lemma \ref{Kruto_Lemma} we can  extract a subsequence of functions $v_\ve$
converging  strongly to some function $v\in H^1(\mathbb{R}^d)$ such that $v=0$ in  $\mathbb{R}^d\setminus\Omega$.
 Thus to complete the proof, it suffices to show that \eqref{limit_pr_eq} is satisfied in the sense of distributions.
 To this end consider an arbitrary $\phi\in C^\infty_0(\Omega)$ ($\phi=0$ in $\mathbb{R}^d\setminus\Omega$) and
 construct test functions $\phi_\ve(x)$
 such that, as $\ve\to 0$,
\begin{align}
	-\frac{1}{\ve^{d+2}}\int_{\mathbb{R}^d} Q\Bigl(\frac{y}{\ve},\frac{x}{\ve}\Bigr)
	\left(\phi_\ve(y)-\phi_\ve(x)\right)dy
	&\rightharpoonup	- A_{ij}^*\partial^2_{x_{i}x_{j}}\phi(x) \ \   \text{weakly in}\  L^2(\Omega),
	\label{slaben'ko}
\\
\phi_\ve(x)	&\to	\phi(x) \ \   \text{strongly in}\  L^2(\Omega),
\label{syl'no}
\end{align}
where $A_{ij}^*$ are given by \eqref{A_ij^star}.  We set
\begin{equation}
\label{Oce_tak_ansatz}
\phi_\ve(x)=\phi(x)+\ve\partial_{x_i}\phi(x)\chi_i^*(x/\ve),
\end{equation}
where $\chi_i^*$ are solutions of  \eqref{komirkova_zadacha}. Then it is straightforward to see that \eqref{syl'no}
holds. To check \eqref{slaben'ko} perform changes of variables $x/\ve=\xi$, $y=x+\ve z$,
$$
-\frac{1}{\ve^{d+2}}\int_{\mathbb{R}^d} Q\Bigl(\frac{y}{\ve},\frac{x}{\ve}\Bigr)
	\left(\phi_\ve(y)-\phi_\ve(x)\right)dy= -\frac{1}{\ve^{2}}\int_{\mathbb{R}^d} Q\Bigl(\xi+z,\xi\Bigr)
	\left(\phi_\ve(x+\ve z)-\phi_\ve(x)\right)dz,
$$
and substitute the expansion
\begin{equation*}
\begin{aligned}
\phi_\ve(x+\ve z)=\phi_\ve(x)&+\ve(z_i+\chi_i^*(\xi+z)-\chi^*_i(\xi))\partial_{x_i}\phi(x)+\frac{\ve^2}{2}\partial^2_{x_ix_j}\phi(x)z_iz_j\\
&+\ve^2\chi_i^*(\xi+z)\left(z_j \partial^2_{x_ix_j}\phi(x) +O(\ve|z|^2)\right)+ O(\ve^3|z|^3).
\end{aligned}
\end{equation*}
Taking into account  \eqref{komirkova_zadacha} we find that
$$
-\frac{1}{\ve^{d+2}}\int_{\mathbb{R}^d} Q\Bigl(\frac{y}{\ve},\frac{x}{\ve}\Bigr)
	\left(\phi_\ve(y)-\phi_\ve(x)\right)dy= -\int_{\mathbb{R}^d} Q(\xi+z,\xi)
	\partial^2_{x_ix_j}\phi(x) \left(\textstyle\frac{1}{2}z_iz_j+\chi_i^*(\xi+z)z_j  \right)dz+O(\ve).
$$
Since functions $a^*_{ij}(\xi)=\int_{\mathbb{R}^d} Q(\xi+z,\xi)\left(\textstyle\frac{1}{2}z_iz_j+\chi_i^*(\xi+z)z_j  \right)dz$ are periodic, we have
$$
a^*_{ij}(x/\ve)\partial^2_{x_ix_j}\phi(x) \rightharpoonup \partial^2_{x_ix_j}\phi(x) \int_{\mathbb{T}^d} a^*_{ij}(\xi) d\xi \quad\text{weakly in} \ L^2(\Omega),
$$
 so that \eqref{slaben'ko} is also proved.

Now we can use $\varphi(x/\ve)\varphi^*(x/\ve)\phi_\ve(x)$ as a test function in  \eqref{inhomog_pr}  and pass to the limit as $\ve \to 0$. We have
\begin{equation*}
-\frac{1}{\ve^{d+2}}\int_{\mathbb{R}^d} v_\ve(x)
\int_{\mathbb{R}^d}
Q\Bigl(\frac{y}{\ve},\frac{x}{\ve}\Bigr)
	\Bigl(\phi_\ve(y)-\phi_\ve(x)\Bigr)dy=\int_{\mathbb{R}^d}\varphi\Bigl(\frac{x}{\ve}\Bigr) \varphi^*\Bigl(\frac{x}{\ve}\Bigr) (f_\ve(x)-	 v_\ve(x))\phi_\ve(x)dx,
\end{equation*}
whence we find in the limit $\ve\to 0$,
\begin{equation*}
-\int_{\mathbb{R}^d} v(x) A_{ij}^* \partial^2_{x_i x_j} \phi(x) dx =\int_{\mathbb{T}^d} \varphi(\xi) \varphi^*(\xi) d\xi \int_{\mathbb{R}^d} (f(x)-v(x))\phi(x)dx.
\end{equation*}
Thus, $v$ is a solution of the problem \eqref{limit_pr_eq}--\eqref{limit_pr_bc}, and thanks to its uniqueness the whole sequence of functions $v_\ve$ converge to $v$ strongly in $L^2(\mathbb{R}^d)$ as $\ve \to 0$. Theorem is proved.
\end{proof}

\subsection{Proof of Theorem \ref{Golovna_periodic}}


Consider first the case $H(p_0)=m$. By \eqref{Hamilt_all_p_sup}  we have, $\forall \delta>0$ there is a function $\phi\in C(\mathbb{T}^d)$ such that
$$
-\int_{\mathbb{R}^d} J(\xi-\eta)e^{p_0\cdot \xi-\eta)}\kappa(\xi,\eta) \phi(\eta)d\eta+
		a(\xi) \phi(\xi)\leq (m-\delta) \phi(\xi).
$$
On the other hand
\begin{equation*}
\begin{aligned}
\lambda_\ve=\sup\Biggl\{\lambda \,\Bigl.\Bigr| \Biggr.&\, \exists \rho\in C(\overline{\Omega}),\, \rho>0, \ \text{such that} \\
&\Biggl.
-\frac{1}{\ve^{d}}\int_{\Omega} J\Bigl(\frac{x-y}{\ve}\Bigr)
\kappa\Bigl(\frac{x}{\ve},\frac{y}{\ve}\Bigr) \rho(y)dy+
a\Bigl(\frac{x}{\ve}\Bigr) \rho(x)\geq \lambda \rho(x) \ \text{in}\ \overline{\Omega} \Biggr\},
 \end{aligned}
\end{equation*}
then taking $\rho(x)=e^{p_0\cdot x/\ve}\phi(x/\ve)$ we see that $\lambda_\ve\geq m-\delta$. Thus $\lambda_\ve\geq m$ and
if $\min_{x\in\overline{\Omega}} a(x/\ve)=m$ (that is always true for sufficiently small $\ve$) then $\lambda_\ve = m$ and it belongs to the
essential spectrum of $\mathcal{L}_\ve$.

Now consider the case $H(p_0)<m$. Let $v_\ve$ be an eigenfunction corresponding to an arbitrary (not necessarily principal) eigenvalue $\mu_\ve$.
First we show that if the real part of $\mu_\ve$ is bounded then $|\mu_\ve|$ is bounded. To this end multiply \eqref{or_a_eq} by $\varphi(\frac{x}{\ve})\varphi^*(\frac{x}{\ve})\overline v_\ve(x)$,
where  $\overline v_\ve$  denotes the complex conjugate,
take real part and integrate over $\mathbb{R}^d$. Then using  \eqref{super_garne_property} we obtain the following equality
\begin{equation}
\label{otsinka_scho_nado}
\frac{1}{2\ve^{d+2}}\int_{\mathbb{R}^d}\int_{\mathbb{R}^d} Q\Bigl(\frac{x}{\ve},\frac{y}{\ve}\Bigr)
\left| v_\ve(y)-v_\ve(x)\right|^2 dydx
={\rm Re}\mu_\ve\int_{\mathbb{R}^d} \left| v_\ve(x) \right|^2 \varphi\Bigl(\frac{x}{\ve}\Bigr)\varphi^*\Bigl(\frac{x}{\ve}\Bigr)dx
\end{equation}
Therefore if $\|v_\ve\|_{L^2(\Omega)}=1$ then  \eqref{garnifunktsii}
holds and by Lemma \ref{Kruto_Lemma} functions converge strongly in $L^2(\Omega)$ to a (nonzero) function $v$ as  $\ve\to 0$ along a subsequence. Then introducing $\tilde v_\ve=\frac{1}{1+\mu_\ve}v_\ve$ and passing to the limit in the equality $\tilde{\mathcal{L}}_\ve \tilde v_\ve+ \tilde v_\ve=v_\ve$ we obtain by virtue of Theorem
\ref{pro_resolventy} that functions $\tilde v_\ve$ converge to a solution $\tilde v$ of the problem
${\mathcal{L}}_0 \tilde v=v$. One the other hand $\tilde v_\ve\to 0$ (since $|\mu_\ve|\to\infty$), thus $v=0$, a contradiction.

Next we show that for $\mu$ from every compact subset $M$ of $\mathbb{C}\setminus\cup_{k=1}^\infty \{\Lambda_k\}$ and sufficiently small $\ve$  the operator $\bigl(\mu I-\tilde{\mathcal{L}}_\ve\bigr)^{-1}: L^2(\Omega)\to L^2(\Omega)$ exists and its operator norm is uniformly bounded. Indeed, otherwise there exist functions $v_\ve$ with $\|v_\ve\|_{L^2(\Omega)}=1$ and numbers $\mu_\ve\in M$ such that
$\tilde{\mathcal{L}}_\ve v_\ve-\mu_\ve v_\ve\to 0$ strongly in $L^2(\Omega)$ as $\ve\to 0$ (along a subsequence). Then, by  Theorem~\ref{pro_resolventy}
one can extract a subsequence of functions $v_\ve$ converging to a nontrivial solution $v$ of the equation ${\mathcal{L}}_0 v-\mu v=0$ for some $\mu\in M$, that is impossible
since $M\cap \cup_{k=1}^\infty\{\Lambda_k\}=\emptyset$. Moreover, applying Theorem  \ref{pro_resolventy} we conclude that $\forall f\in L^2(\Omega)$ and $\mu\in  M$, $\bigl(\mu I-\tilde{\mathcal{L}}_\ve\bigr)^{-1}f\to \bigl(\mu I-{\mathcal{L}}_0\bigr)^{-1}f$ strongly
in $L^2(\Omega)$ as $\ve\to 0$, therefore spectral projectors $\Pi_\ve(\omega)=\frac{1}{2\pi i}\int_{\partial \omega}\bigl(\mu I-\tilde{\mathcal{L}}_\ve\bigr)^{-1} d\mu$ converge strongly to  the projector
$\Pi_0(\omega)=\frac{1}{2\pi i}\int_{\partial \omega}\bigl(\mu I-\mathcal{L}_0\bigr)^{-1} d\mu$ for any bounded open set $\omega\subset\mathbb{C}$ whose boundary is smooth and does not contain eigenvalues $\Lambda_k$.
In fact, there is the compact convergence of projectors, i.e. additionally to the strong convergence it holds that
for any sequence of functions $f_\ve$ bounded in $L^2(\Omega)$ the sequence of projections  $v_\ve = \Pi_\ve(\omega) f_\ve$ contains
a strongly converging subsequence.  Indeed, observe that  $v_\ve$ satisfy
\begin{equation*}
\tilde{\mathcal{L}}_\ve v_\ve+v_\ve=\frac{1}{2\pi i}\int_{\partial \omega}(1+\mu)\bigl(\mu I-\tilde{\mathcal{L}}_\ve\bigr)^{-1} f_\ve d\mu
\end{equation*}
and thanks to  the uniform boundedness of $\bigl(\mu I-\tilde{\mathcal{L}}_\ve\bigr)^{-1}$  Theorem  \ref{pro_resolventy} guarantees that the sequence
of functions  $v_\ve$ does contain a strongly converging (as $\ve\to 0$) subsequence. The compact convergence of spectral projectors in turn implies that the
dimensions of the subspaces $\Pi_\ve(\omega) L^2(\Omega)$ and  $\Pi_0(\omega) L^2(\Omega)$ coincide as $\ve$ is sufficiently small, i.e.
operators $\tilde{\mathcal{L}}_\ve$ and ${\mathcal{L}}_0$ have the same number of eigenvalues (counting multiplicities) in the domain $\omega$.
This means, in particular, that there is an eigenvalue of $\tilde{\mathcal{L}}_\ve$ converging to $\Lambda_1$ as $\ve\to 0$. Therefore
the principal eigenvalue of $\tilde{\mathcal{L}}_\ve$ exists for sufficiently small $\ve$. It remains bounded  as $\ve\to 0$ since its real part remains bounded,
and it converges (up to a subsequence) to an eigenvalue of ${\mathcal{L}}_0$ (any compact subset of  $\mathbb{C}\setminus\cup_{k=1}^\infty \{\Lambda_k\}$ belongs to the resolvent
set of $\tilde{\mathcal{L}}_\ve$ for sufficiently small $\ve$). Thus the principal eigenvalue of $\tilde{\mathcal{L}}_\ve$ converges to $\Lambda_1$. Other eigenvalues
can be treated similarly. Theorem~\ref{Golovna_periodic} is proved.

\section{Locally periodic case}
\label{Locallyperiodiccase}

This section is devoted to the proof of Theorem \ref{Golovna_locperiodic}, i.e. we study operator $\mathcal{L}_\ve$ given by
\eqref{main_operator_locper}  with generic functions $\kappa$ and $a$ satisfying \eqref{akappa}.  Introduce the notation
\begin{equation*}
E_\ve(x,y)=\frac{1}{\ve^d}J\Bigl(\frac{x-y}{\ve}\Bigr)\kappa\Bigl(x,y,\frac{x}{\ve}, \frac{y}{\ve}\Bigr),
\end{equation*}
then $\mathcal{L}_\ve$ writes as
\begin{equation}
 \label{loc_per_op}
\mathcal{L}_\ve\rho_\ve= -\int_{\Omega} E_\ve(x,y) \rho_\ve(y) dy +a\Bigl(x,\frac{x}{\ve}\Bigr)\rho_\ve(x),
\end{equation}
and
%
\begin{equation}
\lambda_\ve=\sup\Biggl\{\lambda \,\bigl.\Bigr| \Biggr.
\, \exists v\in C(\overline\Omega),\, v>0 \ \text{such that} \
\Biggl.
-\int_{\Omega}  E_\ve(x,y)
\frac{v(y)}{v(x)}dy+
 a\Bigl(x,\frac{x}{\ve}\Bigr) \geq \lambda  \ \text{in}\ \Omega \Biggr\}.
\label{Pr_e_v_supformula_bis}
\end{equation}
Assume first that $\lambda_\ve<\min_{\overline{\Omega}} a(x,x/\ve)$, then
$\lambda_\ve$ is the principal eigenvalue of  $\mathcal{L}_\ve$.  Therefore the corresponding eigenfunction can be
written as $\rho_\ve=e^{-\frac{1}{\ve} W_\ve(x)}$. We consider the following ansatz for $W_\ve$, $W_\ve(x)= W(x)+\ve w(x,x/\ve)+\dots$,
where $w(x,\xi)$ is periodic in $\xi$. Together with the fast variable $\xi=x/\ve$  we also introduce $\eta=y/\ve$ and regard these variables as
independent of the slow ones, $x$ and $y$. We also hypothesize that $\lambda_\ve$ converges to a finite number $-\Lambda$.
Then, for fixed  $x\in \Omega$ we expand
$$
W(y)=W(x+\ve(\eta-\xi))= W(x)+\ve\nabla W(x)\cdot (\eta-\xi)+\dots
$$
and  formally obtain in the leading term of the eigenvalue equation $\mathcal{L}_\ve\rho_\ve=\lambda_\ve \rho_\ve$ that
$\Lambda=-H(\nabla W(x),x)$,
  $H(p,x)$ being the principal eigenvalue of the cell problem
\begin{equation}
\label{typu_cell_pr-m}
-\int_{\mathbb{R}^d} J(\xi- \eta)e^{p\cdot (\xi-\eta)}\kappa(x,x,\xi,\eta) \varphi(\eta,p,x)d \eta+
 a(x, \xi) \varphi(\xi,p,x)=H(p,x) \varphi(\xi,p,x) \ \text{on}\ \mathbb{T}^d
 \end{equation}
depending on the parameters $p\in\mathbb{R}^d$ and $x\in\overline{\Omega}$, while $w(x,\xi)=-\log\varphi(\xi,\nabla W(x),x)$.
Adopting the normalization condition $\int_{\mathbb{T}^n}\varphi(\xi,p,x) d\xi =1$ we obtain a function $\varphi(\xi,p,x)$ continuous in all their arguments ($\xi$, $p$ and $x$), provided that the principal
eigenvalue exists. Notice that the principal eigenvalue $H(p,x)$ is given by  \eqref{Hamilt_all_p_sup_dep_on_x} and always satisfies
 $H(p,x)<\min_{\xi\in\mathbb{T}^d} a(x,\xi)$, moreover the latter inequality is  sufficient and
 necessary for existence of a principal eigenvalue of  \eqref{typu_cell_pr-m}.  We show below
 that  if
 \begin{equation}
 \label{zaiva_umova}
 \lambda_\ve<\min_{\overline{\Omega}} a(x,x/\ve)\ \text{and} \  H(p,x)<\min_{\xi\in\mathbb{T}^d} a(x,\xi)
 \end{equation}
 then $\lambda_\ve\to -\Lambda$ as $\ve\to 0$,
 where $\Lambda$ is in fact the minimal eigenvalue of the problem $-H(\nabla W(x),x)=\Lambda$ in $\Omega$ or, equivalently,
 \begin{equation*}
 \Lambda=\min \big\{\tilde\Lambda\,|\, \exists\ \text{a viscosity subsolution of} \ -H(\nabla W(x),x)\leq \tilde\Lambda \ \text{in}\ \Omega\bigr\}.
 \end{equation*}
 As known, see, e.g.,  \cite{Mit2008} this formula  (along with  \eqref{Lambda_pershaformulka} and \eqref{Diya})  determines the unique additive eigenvalue $\Lambda$ of problem \eqref{HJ_lim_eq}.

The additional technical assumptions \eqref{zaiva_umova} will then be eliminated by devising small deformations of
$a(x,\xi)$ regularizing eigenvalue problems, and in this way we will get the proof of Theorem \ref{Golovna_locperiodic}.

%

\begin{thm}
\label{localno_periodychna_teorema}
Suppose that $J$ satisfies
\eqref{umova_dva} and $\kappa$, $a$ satisfy \eqref{akappa}.  Assume also that
$\lambda_\ve$ (given by \eqref{Pr_e_v_supformula}) and $H(p,x)$ (given by  \eqref{Hamilt_all_p_sup_dep_on_x}) satisfy  \eqref{zaiva_umova}.
Then $\lambda_\ve\to -\Lambda$ as $\ve\to 0$,
where $\Lambda$ is a unique additive eigenvalue of problem \eqref{HJ_lim_eq}.
 \end{thm}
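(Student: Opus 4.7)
The plan is to combine a Cole--Hopf substitution $\rho_\ve=e^{-W_\ve/\ve}$ with a nonlocal version of Evans' perturbed test function method, showing that the half-relaxed limits of $W_\ve$ are viscosity sub/supersolutions of \eqref{HJ_lim_eq}. Under \eqref{zaiva_umova} the principal eigenfunction is strictly positive, so $W_\ve$ is well defined; substituting into $\mathcal{L}_\ve\rho_\ve=\lambda_\ve\rho_\ve$, dividing by $\rho_\ve(x)$, and changing variables $z=(y-x)/\ve$ yields the pointwise identity
\begin{equation*}
a(x,x/\ve)-\lambda_\ve=\int_{\mathbb{R}^d}J(-z)\kappa\bigl(x,x+\ve z,x/\ve,x/\ve+z\bigr)\mathbf{1}_{\Omega}(x+\ve z)\,e^{-(W_\ve(x+\ve z)-W_\ve(x))/\ve}\,dz.
\end{equation*}
Testing $v\equiv 1$ in \eqref{Pr_e_v_supformula_bis} and using the trivial upper bound $\lambda_\ve\leq\max a$ shows $|\lambda_\ve|\leq C$, so after the normalization $\min_{\overline\Omega}W_\ve=0$ we may pass to a subsequence with $\lambda_\ve\to-\Lambda^*$.

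The first technical step is to prove uniform bounds and a uniform modulus-of-continuity property for $W_\ve$ that make the half-relaxed envelopes
\begin{equation*}
W^*(x)=\limsup_{\ve\to 0,\,y\to x}W_\ve(y),\qquad W_*(x)=\liminf_{\ve\to 0,\,y\to x}W_\ve(y)
\end{equation*}
finite bounded semicontinuous functions on $\overline\Omega$. This is the content of the announced Lemma \ref{HighTechLemma} and is the \emph{main obstacle} of the analysis: the Cole--Hopf transform of a nonlocal operator is no longer a (degenerate) elliptic operator, so the classical Bernstein method is unavailable, and scaling-invariant Harnack inequalities are not known for the class of kernels considered. The bounds must instead be extracted from the displayed identity itself, exploiting the super-Gaussian decay of $J$ in \eqref{umova_dva} to rule out oscillations of $W_\ve$ on large scales.

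Granted these bounds, the subsolution property of $W^*$ is established as follows. Let $\psi\in C^1(\overline\Omega)$ touch $W^*$ strictly from above at $x_0\in\Omega$, set $p_0=\nabla\psi(x_0)$, and note that the second hypothesis in \eqref{zaiva_umova} guarantees existence of a positive continuous principal eigenfunction $\varphi(\cdot,p_0,x_0)$ of \eqref{typu_cell_pr-m}. Define the perturbed test function
\begin{equation*}
\psi_\ve(x)=\psi(x)-\ve\log\varphi(x/\ve,p_0,x_0),
\end{equation*}
which converges uniformly to $\psi$. A local maximizer $x_\ve$ of $W_\ve-\psi_\ve$ then exists with $x_\ve\to x_0$, and the inequality $W_\ve(y)-W_\ve(x_\ve)\leq\psi_\ve(y)-\psi_\ve(x_\ve)$ (valid in a small ball around $x_\ve$, contributions from its complement being absorbed by the decay of $J$) together with the monotonicity of $t\mapsto e^{-t/\ve}$ provides a pointwise lower bound on the exponential in the displayed identity at $x=x_\ve$. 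Taylor-expanding $\psi$ to first order around $x_\ve$, changing variables $z=(y-x_\ve)/\ve$, and recognizing the left-hand side of \eqref{typu_cell_pr-m} in the limit yields $-H(p_0,x_0)\leq\Lambda^*$. The supersolution property of $W_*$ follows by the symmetric argument with test functions touching from below and local minimizers; the one-sided boundary inequality in \eqref{HJ_lim_eq} is handled at $x_0\in\partial\Omega$ by the same scheme, using that the nonlocal integral still samples interior points near $\partial\Omega$ since $\Omega$ is $C^1$.

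It remains to identify $\Lambda^*$ with $\Lambda$. This is immediate from the uniqueness of the additive eigenvalue of \eqref{HJ_lim_eq}, characterized equivalently by \eqref{Lambda_pershaformulka}, \eqref{Diya} and the minimization formula recalled just before the statement of Theorem \ref{localno_periodychna_teorema} (cf.\ \cite{Mit2008}): existence of the bounded subsolution $W^*$ forces $\Lambda^*\geq\Lambda$, while existence of the supersolution $W_*$ satisfying the boundary inequality forces $\Lambda^*\leq\Lambda$. Since this holds for every convergent subsequence of $\lambda_\ve$, we conclude that $\lambda_\ve\to-\Lambda$ along the full sequence.
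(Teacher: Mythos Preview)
Your subsolution argument for the upper half-relaxed limit $W^*$ is essentially the paper's, and so is the use of the perturbed test function built from the cell eigenfunction $\varphi$ of \eqref{typu_cell_pr-m}; this part is fine.

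The divergence is in the lower bound $\liminf_{\ve\to 0}\lambda_\ve\ge -\Lambda$. The paper does \emph{not} pass through a supersolution for $W_*$ at all. It obtains the lower bound directly from the sup characterization \eqref{Pr_e_v_supformula_bis}: for any $W\in C^\infty_0(\mathbb R^d)$ one plugs in the competitor
\[
v_\ve(x)=e^{-W(x)/\ve}\,\varphi\bigl(x/\ve,\nabla W(x),x\bigr),
\]
expands $W(x-\ve z)$ to first order, recognizes the cell problem \eqref{typu_cell_pr-m}, and gets $\liminf_{\ve\to 0}\lambda_\ve\ge \min_{\overline\Omega}H(\nabla W(x),x)$. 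Optimizing over $W$ and invoking \eqref{Lambda_pershaformulka} yields the bound in one line. This uses neither $W_*$, nor any boundary analysis, nor any uniform lower bound on $W_\ve$.

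Your alternative route through the supersolution property of $W_*$ has a genuine gap. Lemma~\ref{HighTechLemma} as proved in the paper controls only $W^*$: it gives a uniform upper bound $\max_{\overline\Omega}W_\ve\le C$ and shows $W^*$ is bounded below, but it does \emph{not} produce a uniform lower bound on $W_\ve$ itself, and hence does not guarantee that $W_*$ is bounded below or that it is a usable supersolution. (The paper's normalization is $\int_{\Omega'}W_\ve=0$, not $\min W_\ve=0$, precisely because the argument goes through $L^{d+1}$-averages and Morrey; pointwise minima of $W_\ve$ are not controlled.) Your ``symmetric argument'' therefore rests on an estimate that is not available. Moreover, your treatment of the tail in the subsolution step assumes the comparison $W_\ve(y)-W_\ve(x_\ve)\le\psi_\ve(y)-\psi_\ve(x_\ve)$ only on a small ball, with the complement handled by decay of $J$; but without a uniform lower bound on $W_\ve$ the factor $e^{(W_\ve(x_\ve)-W_\ve(y))/\ve}$ is not controlled on that complement. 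The paper avoids this by taking $x_\ve$ to be a \emph{global} maximizer of $W_\ve-\Psi_\ve$ over $\overline\Omega$, so the comparison holds on all of $\Omega$ and the integral needs no splitting.

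In short: keep your subsolution half, but replace the supersolution half by the paper's direct competitor argument in \eqref{Pr_e_v_supformula_bis}. That yields the full statement with only the $W^*$ bounds, which is exactly what Lemma~\ref{HighTechLemma} delivers.
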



\begin{proof} We begin with the following lower bound, obtained by using the test function $$v_\ve(x)=e^{-\frac{1}{\ve}W(x)}\varphi(x/\ve,\nabla W(x),x)$$ in \eqref{Pr_e_v_supformula_bis},
$$
\lambda_\ve\geq \min_{x\in\overline{\Omega}}\Bigl\{-\int_{\Omega}  E_\ve(x,y)
\frac{v_\ve(y)}{v_\ve(x)}dy+
 a\Bigl(x,\frac{x}{\ve}\Bigr)\Bigr\},
$$
where  $W$ is an arbitrary function of the class $C_0^\infty(\mathbb{R}^d)$. Notice that uniformly in $x\in\Omega$,
\begin{equation*}
\begin{aligned}
-\int_\Omega
E_\ve(x,y)
\frac{v_\ve(y)}{v_\ve(x)}dy&+
 a\Bigl(x,\frac{x}{\ve}\Bigr)=  a\Bigl(x,\frac{x}{\ve}\Bigr)+o(1) \\
& -\int_{\frac{x}{\ve}-\frac{1}{\ve}\Omega}
 J(z)\kappa\Bigl(x,x,\frac{x}{\ve}, \frac{x}{\ve}-z\Bigr)e^{\frac{1}{\ve}(W(x)-W(x-\ve z))}
\frac{\varphi(x/\ve-z,\nabla W(x),x)}{\varphi(x/\ve,\nabla W(x),x)}dz.
\end{aligned}
\end{equation*}
Expanding $W(x-\ve z)=W(x)-\ve \nabla W(x)\cdot z +O(\ve^2 |z|^2)$, using  \eqref{typu_cell_pr-m} and taking into account   \eqref{umova_dva}
we get
\begin{equation}
\label{first_good_estimate}
\liminf_{\ve\to 0} \lambda_\ve \geq \min_{x\in \overline\Omega} H(\nabla W(x),x).
\end{equation}
Therefore by density of functions $W|_{\overline{\Omega}}$, $W\in C^\infty_0(\mathbb{R}^d)$ in $C^1(\overline{\Omega})$ we have
\begin{equation}
\label{first_good_otsinka_final}
\liminf_{\ve\to 0} \lambda_\ve \geq -\Lambda, \quad \text{where}\ \Lambda=\inf_{W\in C^1(\overline{\Omega})}\max_{x\in \overline\Omega} -H(\nabla W(x),x).
\end{equation}

Next, considering a partial limit $\lambda$ of $\lambda_\ve$ as $\ve\to 0$, we use the techniques of half-relaxed limits (introduced  in \cite{BarPer1987}) to show that there is a viscosity
subsolution $W^\ast (x)$ of
 \begin{equation}
 \label{subsol_partial_lambda}
 -H(\nabla W^\ast (x), x)\leq -\lambda\quad\text{in}\ \Omega.
 \end{equation}
Specifically, let $e^{-\frac{1}{\ve} W_\ve(x)}$ be the eigenfunction of $\mathcal{L}_\ve$ corresponding to the eigenvalue $\lambda_\ve$
and assume that this function satisfies the following normalization condition
\begin{equation}
 \label{seredne_nul}
 \int_{\Omega^\prime} W_\ve(x)dx=0,
 \end{equation}
where $\Omega^\prime$ is a domain such that $\overline{\Omega^\prime}\subset\Omega$.  Since $\lambda_\ve<\min_{x\in \overline {\Omega}} a(x,x/\ve)$ and \eqref{first_good_otsinka_final} holds,
we can assume, after passing to a subsequence that  $\lambda_\ve\to \lambda$ as $\ve\to 0$. Then we consider the half-relaxed limit  
 \begin{equation}
 W^*(x)=\lim_{r\to 0}\limsup_{\ve \to 0} \sup\{W_\ve(\xi) | \, \xi \in B_r(x)\cap\Omega\}.
 \label{relaxed}
\end{equation}
 \begin{lem}
 \label{HighTechLemma}
 Assume that functions $W_\ve(x)$ satisfy \eqref{seredne_nul}.
 Then $W^*(x)$ given by \eqref{relaxed} is a bounded 
 function in $\Omega$.
 \end{lem}

 \begin{proof}  As $J(0)>0$ and $J$ is continuous,  
 there is $\hat r_0>0$ such that $\inf_{z\in B_{\hat r_0}}J(z)>0$. Furthermore, since $\partial\Omega$ is $C^1$-smooth, there
 is $\tilde r_\ve\geq c\ve$ (with $c>0$ independent of $\ve$) such that any ball $B_{\hat r_0 \ve}(x)$ centered at a point $x\in\overline{\Omega}$
 contains a ball  $B_{\tilde r_\ve}(\xi)$ that is also contained in $\Omega$, i.e. $B_{\tilde r_\ve}(\xi)\subset B_{\hat r_0 \ve}(x)\cap \Omega$.

 We argue as in Lemma \ref{Kruto_Lemma}. Thanks  to \eqref{first_good_otsinka_final} eigenvalues $\lambda_\ve$ are uniformly  bounded from below and  we have
 $$
\int_\Omega\int_{\Omega}  J\Bigl(\frac{x-y}{\ve}\Bigr)\kappa\Bigl(x,y,\frac{x}{\ve},\frac{y}{\ve}\Bigr)e^{\frac{1}{\ve}(W_\ve(x)-W_\ve(y))}dxdy\leq C\ve^d,
 $$
 therefore
$$
\int_{\Omega_\ve} \int_{|z|<\tilde r_\ve}  e^{\frac{1}{\ve}|W_\ve(x+z)-W_\ve(x)|}dzdx\leq C\ve^d,
 $$
where $\Omega_\ve=\{x\in\Omega\,|\,{\rm dist}(x,\partial\Omega)>\tilde r_\ve\}$. In particular,
 \begin{equation}
 \label{persha_a_priori}
\int_{\Omega_\ve} \int_{|z|<\tilde r_\ve}  |W_\ve(x+z)-W_\ve(x)|^{d+1}dzdx\leq C\ve^{2d+1}.
\end{equation}
It follows that there is some $r_\ve$, $\tilde r_\ve/2\leq r_\ve \leq \tilde r_\ve$ such that
 $$
\int_{\Omega_\ve} \int_{|z|= r_\ve}  |W_\ve(x+z)-W_\ve(x)|^{d+1}dSdx\leq C\ve^{2d}.
 $$
Set
 $$
\overline{W}_\ve(x)=\frac{1}{|B_1|r_\ve^d}\int_{|z|< r_\ve}  W_\ve(x+z)dz.
 $$
 Using Jensen's inequality we get
 \begin{equation}
 \label{Dopomoga}
 \int_{\Omega_\ve}
 |\overline{W}_\ve(x)-W_\ve(x)|^{d+1}dx
 \leq C \ve^{d+1}.
\end{equation}
 Then, arguing as in Lemma \ref{Kruto_Lemma} we derive
  \begin{equation}
  \label{DuzheGarnoGradW}
 \int_{\Omega_\ve}
 |\nabla \overline{W}_\ve(x)|^{d+1}dx
 \leq C.
\end{equation}
Now, taking into account \eqref{seredne_nul}, \eqref{Dopomoga} we can apply the Poincar\'e inequality to
conclude that 
$\int_{\Omega_\ve}
 |\overline{W}_\ve(x)|^{d+1}dx
 \leq C$ for small $\ve$.
Then by the compactness of the embedding $W^{1,d+1}(\Omega_\ve)\subset C(\overline{\Omega}_\ve)$
(Morrey's theorem) we derive that $|\overline{W}_\ve(x)|\leq C$ on $\Omega_\ve$ with $C$ independent
of $\ve$.  Combining this with \eqref{Dopomoga}  we infer that $W^\ast(x)$ is bounded from below.

Repeating the above reasonings for the positive part  $W^+_\ve(x)$ of $W_\ve(x)$ (notice that the inequality  \eqref{persha_a_priori} is also valid for
$W^+_\ve(x)$) we get that
$\overline{W}_\ve^+(x)=\frac{1}{|B_1|r_\ve^d}\int_{|z|< r_\ve}  W_\ve^+(x+z)dz$ satisfies 
$\int_{\Omega_\ve} |\nabla \overline{W}_\ve^+(x)|^{d+1} dx \leq C$. Besides, using \eqref{Dopomoga}  one sees that $\int_{\Omega^\prime} \overline{W}_\ve^+(x) dx \leq C$.
Then applying the Poincar\'e inequality and exploiting the compactness of the embedding $W^{1,d+1}(\Omega_\ve)\subset C(\overline{\Omega}_\ve)$   we obtain that
$\overline{W}_\ve^{+}(x) \leq C$ on $\Omega_\ve$ with $C$ independent
of $\ve$.

Taking $\log$ of
 $$
 \frac{1}{\ve^d}\int_{\Omega}  J\Bigl(\frac{x-y}{\ve}\Bigr)\kappa\Bigl(x,y,\frac{x}{\ve},\frac{y}{\ve}\Bigr) e^{-\frac{1}{\ve}W_\ve(y)}dy=
(a(x)-\lambda_\ve)  e^{-\frac{1}{\ve}W_\ve(x)}
 $$
 and using Jensen's inequality we get
 $$
 W_\ve(x)\leq  \frac{1}{\int_{\Omega}  J\left(\frac{x-y}{\ve}\right)\kappa(x,y,\frac{x}{\ve},\frac{y}{\ve})dy}\int_{\Omega}  J\Bigl(\frac{x-y}{\ve}\Bigr)\kappa \Bigl(x,y,\frac{x}{\ve},\frac{y}{\ve}\Bigr)  W_\ve(y)dy+\ve R_\ve(x),
$$
where
 $$
 R_\ve(x)= \log(a(x)-\lambda_\ve) 
 -\log \biggl( \frac{1}{\ve^d}\int_{\Omega}  J\Bigl(\frac{x-y}{\ve}\Bigr)\kappa \Bigl(x,y,\frac{x}{\ve},\frac{y}{\ve}\Bigr) dy\biggr).
 $$
 Let $x_\ve\in\overline{\Omega}$ be a maximum point of $W_\ve$. Choose a ball $B_{\tilde r_\ve}(\xi_\ve)$ contained in
 $B_{r_0 \ve}(x_\ve)\cap \Omega$. Then $\xi_\ve\in \Omega_\ve$, $B_{r_\ve}(\xi_\ve)\subset B_{r_0 \ve}(x_\ve)\cap \Omega$  and we have
\begin{equation}
\begin{aligned}
W_\ve(x_\ve)&\leq \frac{1}{\int_{B_{r_\ve}(\xi_\ve)} J\left(\frac{x_\ve-y}{\ve}\right)\kappa(x_\ve,y,\frac{x_\ve}{\ve},\frac{y}{\ve})dy}\int_{B_{r_\ve}(\xi_\ve)}  J\Bigl(\frac{x_\ve-y}{\ve}\Bigr)\kappa \Bigl(x_\ve,y,\frac{x_\ve}{\ve},\frac{y}{\ve}\Bigr) W_\ve(y)dy+C\ve\\
&\leq \frac{C_1}{\int_{B_{r_\ve}(\xi_\ve)} J\left(\frac{x_\ve-y}{\ve}\right)\kappa(x_\ve,y,\frac{x_\ve}{\ve},\frac{y}{\ve})dy}\int_{B_{ r_\ve}(\xi_\ve)}  W^+_\ve(y)dy+C\ve \leq C_2 \overline{W_\ve}^+(\xi_\ve)+C\ve\leq C_3.
\end{aligned}
 \end{equation}

\end{proof}

To show that $W^*$ is a subsolution  of \eqref{subsol_partial_lambda} consider an arbitrary test function $\Phi \in C_0^\infty(\mathbb{R}^d)$ and assume that
a $\max_{x\in \overline\Omega} (W(x)-\Phi(x))$ is attained at a point $x_0\in\Omega$, and this maximum is strict. Then we can extract a subsequence such that the maximum points $x_\ve$ of
$$
\Psi_\ve(x)= W_\ve(x)-\Phi(x)+\ve\log\varphi(x/\ve,\nabla \Phi(x),x)
$$
converge to $x_0$. We have
 $\Psi_\ve(x_\ve)- \Psi_\ve(y)\geq 0$ for $y\in\overline{\Omega}$, or
 $$
W_\ve(x_\ve)-W_\ve(y)\geq \Phi(x_\ve)-\Phi(y)+\ve\log\frac{\varphi(y/\ve,\nabla \Phi(x),x)}{\varphi(x_\ve/\ve,\nabla \Phi(x_\ve),x_\ve)},
$$
therefore
\begin{equation*}
\int_{\Omega} E_\ve(x_\ve,y) 
e^{-\frac{1}{\ve}(W_\ve(y)-W_\ve(x_\ve))}   dy\geq \int_{\Omega} E_\ve(x_\ve,y) e^{-\frac{1}{\ve}(\Phi(y)-\Phi(x_\ve))} \frac{\varphi(y/\ve,\nabla \Phi(y),y)} {\varphi(x_\ve/\ve,\nabla \Phi(x_\ve),x_\ve)}dy.
\end{equation*}
Then 
\begin{equation*}
\begin{aligned}
- \lambda_\ve=&
\int_{\Omega} E_\ve(x_\ve,y) e^{-\frac{1}{\ve}(W_\ve(y)-W_\ve(x_\ve))}   dy -a\Bigl(x_\ve,\frac{x_\ve}{\ve}\Bigr)\\&
\geq\int_{\Omega} E_\ve(x_\ve,y)e^{-\frac{1}{\ve}(\Phi(y)-\Phi(x_\ve))} \frac{\varphi(y/\ve,\nabla \Phi(y),y)} {\varphi(x_\ve/\ve,\nabla \Phi(x_\ve),x_\ve)}dy -a\Bigl(x_\ve,\frac{x_\ve}{\ve}\Bigr),
\end{aligned}
\end{equation*}
and passing to the limit in this inequality as $\ve\to 0$ we derive $-H(\nabla \Phi(x_0),x_0)\leq -\lambda$. Thus
$W^*(x)$ is indeed an upper semicontinuous subsolution of  \eqref{subsol_partial_lambda}, being  a subsolution of \eqref{subsol_partial_lambda} function $W^\ast(x)$ is in fact Lipschitz continuous on $\Omega$ (see, e.g.,
Appendix A.3 in \cite{Ish2013}). Consequently  $-\lambda\geq \Lambda$.

Theorem \ref{localno_periodychna_teorema} is proved.
\end{proof}


\noindent
{\it Proof of  Theorem \ref{Golovna_locperiodic}.}  For sufficiently small $\delta>0$ set
\begin{equation*}
\hat a(x)=\min\nolimits_{\xi\in \mathbb{T}^d} a(x,\xi),\quad \hat a^{(\delta)}(x)=\max\left\{ \hat a(x),\min\nolimits_{y\in\overline{\Omega}} \hat a(y) +\delta/2\right\},
\end{equation*}
and
$$
a^{(\delta)}(x,\xi)=\max \left\{a(x,\xi),\hat a^{(\delta)}(x)+\delta/2\right\}.
$$
Then for any $x\in\overline{\Omega}$ the function $a^{(\delta)}(x,\xi)$ attains its minimum over $\xi\in \mathbb{T}^d$ on a set of positive 
measure, and
$a^{(\delta)}(x,\frac{x}{\ve})$ attains its minimum over $x\in\overline{\Omega}$ on a set of positive measure for sufficiently small $\ve$.  Hence we can
replace $a$ with $a^{(\delta)}$ to modify spectral problems \eqref{start_eq} and \eqref{typu_cell_pr-m} such that they do have some
principal eigenvalues $\lambda_\ve^{(\delta)}$ and $H^{(\delta)}(p,x)$ by Theorem 2.1 in \cite{LiCovWan2017}. Then applying Theorem
\ref{localno_periodychna_teorema} we get that $\lambda_\ve^{(\delta)}\to -\Lambda^{(\delta)}$ as $\ve\to 0$,  where
$\Lambda^{(\delta)}=\inf_{W\in C^1(\overline{\Omega})}\max_{x\in \overline\Omega} -H^{(\delta)}(\nabla W(x),x)$.
On the other hand $|a^{(\delta)}(x,\xi)-a(x,\xi)|\leq \delta$ and therefore $|\lambda_\ve^{(\delta)}-\lambda_\ve|\leq \delta$,
$|\Lambda-\Lambda^{(\delta)}|\leq \delta$. Thus, letting $\delta\to 0$ we obtain that $\lambda_\ve\to-\Lambda$,
 Theorem \ref{Golovna_locperiodic} is proved. \hfill $\qed$

\section{Acknowledgments}
The work of V. Rybalko
was partially supported by the SSF grant for Ukrainian scientists Dnr UKR22-0004.

%


\begin{thebibliography}{30}


\bibitem{BarPer1987}  G. Barles and B. Perthame (1987), Discontinuous solutions of deterministic optimal stopping time
problems, Math. Methods and Numer. Anal.,  21, pp. 557--579.

\bibitem{BatZha2007} P. W. Bates,  G. Zhao (2007), Existence, uniqueness and stability of the stationary solution to a nonlocal evolution equation arising in population dispersal,
J. Math. Anal. Appl., 332(1), 428--440.

\bibitem{BerCovHoa2016} H. Berestycki, J. Coville, H.-H. Vo (2016), On the definition and the properties of the principal eigenvalue of some nonlocal operators
J. Funct. Anal., 271(10), pp. 2701--2751.

\bibitem{Capdeb1998} Y. Capdeboscq (1998), 
Homogenization of a diffusion equation with drift,
C.R. Acad. Sci. Paris S{\'e}rie I, 327, pp. 807--812.

\bibitem{Cap-DolLio1990} I. Capuzzo-Dolcetta, P.-L. Lions (1990),
Hamilton-Jacobi equations with state constraints,
Trans. Amer. Math. Soc., 318 (2), pp. 643--683.

\bibitem{Cov2010}  J. Coville (2010),
On a simple criterion for the existence of a principal eigenfunction of some nonlocal operators, 
J. Differential Equations, 249(11), pp. 2921--2953.

\bibitem{Cov2012} J. Coville (2012),
Harnack type inequality for positive solution of some integral equation, 
Ann. Mat. Pura Appl. (4), 191(3), pp. 503--528.

\bibitem{Cov2015} J. Coville (2015), Nonlocal refuge model with a partial control. Discrete and Continuous Dynamical Systems, 35(4), pp. 1421--1446.

\bibitem{CraIshLio1992}  M.G. Crandall, 
H. Ishii, P.-L. Lions (1992),
 User's guide to viscosity solutions of second order partial differential equations,
 Bull. Amer. Math. Soc. (N.S.), 27 (1), pp. 1--67. 
 
\bibitem{Eva1989} L. C. Evans (1989),
The perturbed test function method for viscosity solutions of nonlinear PDE,
Proc. Roy. Soc. Edinburgh Sect. A, 111(3-4), pp. 359--375.

\bibitem{Fif2017}
  P. C. Fife (2017), An integrodifferential analog of semilinear parabolic PDE’s.
In: Partial differential equations and applications (pp. 137--145). Routledge.

\bibitem{GarRos2009} J. Garcia-Melian, J.D. Rossi (2009), A logistic equation with refuge and nonlocal 
diﬀusion, Commun. Pure Appl. Anal. 8 (6), pp.  2037--2053.

\bibitem{GriHinHutMisVic2005} M. Grinfeld, G. Hines, V. Hutson, K. Mischaikow, G.T. Vickers (2005) , Non-local dispersal, Diﬀerential
Integral Equations, 18 (11), pp. 1299--1320.

\bibitem{HutMarMisVic2003}
V. Hutson, , S. Martinez, K. Mischaikow, G.Y.  Vickers (2003), The evolution of dispersal. J. Math. Biol., 47(6), pp. 483--517.

\bibitem{Ish2013} 
H. Ishii ( 2013 ), A short introduction to viscosity solutions and the large time behavior of solutions of Hamilton–Jacobi equations. In: Loreti, P. , Tchou, N.A., eds., Hamilton-Jacobi Equations: Approximations, Numerical Analysis and Applications. New York : Springer, pp. 111 -- 249.

\bibitem{KaoLouShe2010}  Kao, C-Y.,  Lou, Y., Shen, W. (2010), Random dispersal vs. nonlocal dispersal, Discrete Contin. Dyn. Syst.,  26 (2), 551--596.

\bibitem{Kato1966} T. Kato, Perturbation Theory for Linear Operators , New York: Springer, 1966. 

\bibitem{LiCovWan2017}  F. Li, J, Coville, X. Wang (2017),
On eigenvalue problems arising from nonlocal diffusion models,
Discrete Contin. Dyn. Syst.,  37(2), pp. 879--903.

\bibitem{Mit2008} H. Mitake (2008), Asymptotic solutions of Hamilton-Jacobi equations with state constraints,
 Appl. Math. Optim., 58, pp. 393--410.
 
 \bibitem{OthDunAlt1988} H. G. Othmer, S. R. Dunbar, W. Alt (1988),
Models of dispersal in biological systems, J. Math. Biol., 26(3), 263--298.

\bibitem{PiaPirZhi2022} A. Piatnitski, S. Pirogov, E. Zhizhina (2022),
Large deviations for Markov jump processes in periodic and locally periodic environments.
Ann. Probab., 32 (6), pp. 4611--4641.

\bibitem{PiaRybRyb2014}
A. Piatnitski, A. Rybalko, V. Rybalko (2014),
Ground states of singularly perturbed convection-diffusion equation with oscillating coefficients,  
ESAIM Control Optim. Calc. Var., 20(4), pp.1059--1077.

\bibitem{PiaRyb2016} A. Piatnitski, V. Rybalko (2016),
On the first eigenpair of singularly perturbed operators with oscillating coefficients,
Comm. Partial Differential Equations,  41 (1), pp. 1--31.


\bibitem{PiaZhi2017} A. Piatnitski, E. Zhizhina (2017), Periodic homogenization of nonlocal operators with
a convolution-type kernel,
SIAM J. Math. Analysis, 49 (1), pp.64--81

\bibitem{PiaZhi2019} A. Piatnitski, E. Zhizhina (2019), Homogenization of biased convolution type operators, Asymptot. Anal., 115 (3-4),
pp. 241--262.


\bibitem{SheZha2012} W. Shen, A. Zhang (2012),  Stationary solutions and spreading speeds of nonlocal monostable equations in
space periodic habitats, Proc. Amer. Math. Soc., 140 (5),  pp. 1681--1696.


\end{thebibliography}
\end{document}